\def\be{\begin{equation}}
\def\ee{\end{equation}}
\newtheorem*{property*}{Property}
\newtheorem{claim}{Claim}
\newtheorem{problem}{Problem}
\newtheorem{conjecture}{Conjecture}
\newtheorem{remark}{Remark}
\newtheorem*{completeness*}{Completeness property}
\newtheorem*{theorem*}{Theorem}
\newtheorem{theorem}{Theorem}
\newtheorem{proposition}{Proposition}
\newtheorem*{proposition*}{Proposition}
\newtheorem{lemma}{Lemma}
\newtheorem{corollary}{Corollary}
\theoremstyle{remark}
\newcommand{\nc}{\newcommand}
\newcommand{\A}{2}
\newcommand{\rr}{r}
\newcommand{\kk}{N}
\newcommand{\F}{\mathbb{F}}
\newcommand{\PP}{\mathscr{P}}
\newcommand{\M}{\mathcal{M}}
\newcommand{\N}{{\mathbb N}}
\nc{\supp}{\operatorname{supp}}
\nc{\Real}{\operatorname{Re}}
\nc{\Imag}{\operatorname{Im}}
\nc{\dif}{\operatorname{d}} \nc{\im}{\operatorname{i}}
\nc{\Hi}{{\mathscr{H}}^\infty} \nc{\Ht}{{\mathscr{H}}^2}
\nc{\Hone}{{\mathscr{H}}^1} \nc{\ol}{\overline} \nc{\bz}{\mathbf{z}}
\nc{\bw}{\mathbf{w}} \nc{\eps}{\varepsilon}
\begin{document}

\title[Extreme Values of Derivatives of  the Riemann zeta function]
{Extreme Values of Derivatives of  the Riemann zeta function}
\author{Daodao Yang}
\address{Institute of Analysis and Number Theory \\ Graz University of Technology \\ Kopernikusgasse 24/II, 
A-8010 Graz \\ Austria}

\email{yang@tugraz.at \quad yangdao2@126.com}

\maketitle

\begin{abstract}
It is proved  that  if $T$ is sufficiently large, then uniformly for all positive integers $\ell \leqslant (\log T) / (\log_2 T)$, we have
\begin{equation*}  
\max_{T\leqslant t\leqslant 2T}\left|\zeta^{(\ell)}\Big(1+it\Big)\right| \geqslant e^{\gamma}\cdot \ell^{\ell}\cdot (\ell+1)^{ -(\ell+1)}\cdot\Big(\log_2 T - \log_3 T + O(1)\Big)^{\ell+1} \,,
\end{equation*}
where $\gamma$ is the Euler constant. 
We also  establish lower bounds for maximum of $\big|\zeta^{(\ell)}(\sigma+it)\big|$ when $\ell \in \mathbb N $
 and $\sigma \in [1/2, \,1)$ are fixed.
\end{abstract}

\section{Introduction}

This paper establishes the following new results for extreme values of derivatives of the Riemann zeta
function (in this paper, we use the short-hand
notations, $\,\log_2 T :\,= \log\log T,$ and $\log_3 T :\,= \log\log\log T$ ).

\begin{theorem}\label{Main: sigma =  1} 
If $T$ is sufficiently large, then uniformly for all positive integers $\ell \leqslant (\log T) / (\log_2 T)$, we have
\[  \max_{T\leqslant t\leqslant 2T}\left|\zeta^{(\ell)}\Big(1+it\Big)\right| \geqslant e^{\gamma}\cdot \ell^{\ell}\cdot (\ell+1)^{ -(\ell+1)}\cdot\Big(\log_2 T - \log_3 T + O(1)\Big)^{\ell+1} \,. \]
\end{theorem}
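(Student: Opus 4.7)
The plan is to extend the Granville--Soundararajan Euler-product / Kronecker argument for $|\zeta(1+it)|$ (which is the case $\ell=0$ of the theorem) to the higher derivative through the Dirichlet series identity
\[
(-1)^\ell\zeta^{(\ell)}(1+it)=\sum_{n\ge 1}\frac{(\log n)^\ell}{n^{1+it}}.
\]
First, I would choose $y$ with $\log y=\log_2 T-\log_3 T+O(1)$ and apply the quantitative Dirichlet / Kronecker simultaneous-approximation theorem on $[T,2T]$ to produce $t_0\in[T,2T]$ with $p^{-it_0}=1+o(1)$ uniformly for every prime $p\le y$. The error can be made small enough that $n^{-it_0}=1+o(1)$ uniformly for every $y$-smooth $n$ below a cut-off $N=T^{o(1)}$.

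At this $t_0$ I would approximate $\zeta^{(\ell)}(1+it_0)$ by a Dirichlet polynomial of length $N$ (with the tail controlled by partial summation on $\Re s=1$, uniformly in $\ell$), factor $\zeta=\zeta_y R_y$ into its $y$-smooth and $y$-rough parts, and apply the Leibniz rule:
\[
\zeta^{(\ell)}(1+it_0)=\sum_{k=0}^{\ell}\binom{\ell}{k}\zeta_y^{(k)}(1+it_0)\,R_y^{(\ell-k)}(1+it_0).
\]
At the resonating $t_0$ each smooth factor satisfies $\zeta_y^{(k)}(1+it_0)=\zeta_y^{(k)}(1)(1+o(1))$, while the rough-prime derivatives $R_y^{(\ell-k)}(1+it_0)$ average to $\delta_{k,\ell}$ in $t_0$; a mean-square argument restricted to the Kronecker-selected window then shows that this behaviour is actually attained on a positive-measure set of $t_0$, so that the dominant contribution is the smooth derivative $\zeta_y^{(\ell)}(1)=(-1)^\ell\sum_{n\text{ is }y\text{-smooth}}(\log n)^\ell/n$.

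The final step is the lower bound for the smooth sum. Splitting according to the size of $\log n$, using that every $n\le e^x$ with $x\le\log y$ is automatically $y$-smooth, and estimating the truncated Mertens product $\prod_{e^x<p\le y}(1-1/p)^{-1}=e^\gamma(\log y-x)(1+o(1))$ as a tail factor, one obtains a contribution of order $e^\gamma x^\ell(\log y-x)$ to be optimised over $x\in[0,\log y]$. Since
\[
\max_{0\le x\le\log y}x^\ell(\log y-x)=\frac{\ell^\ell}{(\ell+1)^{\ell+1}}(\log y)^{\ell+1},
\]
this produces exactly the claimed constant $e^\gamma\,\ell^\ell/(\ell+1)^{\ell+1}$; substituting $\log y=\log_2 T-\log_3 T+O(1)$ then gives the theorem.

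\emph{Main obstacle.} The uniformity in $\ell$ up to $(\log T)/\log_2 T$: the target $\ell^\ell/(\ell+1)^{\ell+1}(\log y)^{\ell+1}$ decays roughly like $e^{\gamma-1}(\log y)^{\ell+1}/\ell$, so the Kronecker approximation error, the Dirichlet-polynomial tail, and the rough-prime average must all be shown to be admissibly small simultaneously in $\ell$ and $y$. Balancing these errors while taking $y$ just below the Kronecker threshold $y\sim\log T$ is where the arithmetic-analytic work concentrates.
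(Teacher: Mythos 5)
Your proposal follows a genuinely different route from the paper. The paper runs Soundararajan's resonance method: it sets up a resonator $R(t)=\sum_{n\le T^{1/2}}r(n)n^{-it}$ built from the divisors of $\prod_{p\le x}p^{b-1}$, forms the two moments $M_1,M_2$ weighted by a bump function $\Phi$, and deduces $\max_t|\zeta^{(\ell)}(1+it)|\ge |M_2|/M_1$, which reduces the whole theorem to a combinatorial estimate (Proposition~\ref{maxRatio}). That proposition is proved by splitting the divisor set at level $\delta=\ell/(\ell+1)$ and using $\max_\delta\delta^\ell(1-\delta)=\ell^\ell/(\ell+1)^{\ell+1}$; your final optimisation $\max_x x^\ell(\log y-x)$ is, up to the change of variable $x=\delta\log y$, exactly the same computation, and your Mertens bookkeeping does recover $e^\gamma(\log y-x)$ correctly once one reads the product of the smooth factor $e^\gamma x$ with the rough tail $(\log y)/x-1$ rather than the literal truncated product. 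So your endgame is fine and genuinely parallels the paper's Proposition~\ref{maxRatio}.

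The middle of your argument, however, has a gap that is precisely the one the paper flags in its introduction. You propose $\zeta=\zeta_y R_y$ and Leibniz,
\[
\zeta^{(\ell)}(1+it_0)=\sum_{k=0}^{\ell}\binom{\ell}{k}\zeta_y^{(k)}(1+it_0)\,R_y^{(\ell-k)}(1+it_0),
\]
and assert that the cross terms ($k<\ell$) vanish on a positive-measure subset of the Kronecker window because the rough-prime derivatives ``average to $\delta_{k,\ell}$''. Two things break. First, the Kronecker window that keeps $\zeta_y^{(k)}(1+it)\approx\zeta_y^{(k)}(1)$ for all $k\le\ell$ has width $\ll 1/\log y$, but a mean-square argument controlling $R_y^{(j)}(1+it)$ for $j\ge1$ needs windows long enough to dampen the off-diagonal terms $(m/n)^{it}$ with $m,n$ both $y$-rough, and those terms include $|\log(m/n)|$ that are far smaller than $1/\log y$; so the averaging you invoke cannot be carried out inside the window you need. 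Second, even granting RMS control of order $(\log y)^j/\sqrt{y}$ for $R_y^{(j)}$, the Leibniz coefficients $\binom{\ell}{k}$ inflate the error to roughly $2^\ell(\log y)^{\ell+1}/\sqrt y$, which is catastrophically larger than the target $\asymp(\log y)^{\ell+1}/\ell$ once $\ell$ approaches the stated range $\ell\le(\log T)/\log_2T$ (with $y\asymp(\log T)/\log_2 T$). This is exactly why the paper observes that methods resting on the Euler product or on multiplicativity (Littlewood, Levinson, Granville--Soundararajan, Aistleitner--Mahatab--Munsch) are not obviously adaptable to $\zeta^{(\ell)}$: the function $n\mapsto(\log n)^\ell$ is not multiplicative, $\zeta^{(\ell)}$ has no Euler product, and the Leibniz surrogate you use to re-create one produces uncontrolled cross terms. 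The resonance method avoids the factorisation entirely, which is the essential structural difference between your proposal and the paper's proof.
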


\begin{remark}
In our Theorem \ref{Main: sigma =  1} , $\ell$ does not have to be fixed. In particular, if  $\ell = [(\log T)/(\log_2 T)]$, then for sufficiently large $T$, we have  \[\max_{T \leqslant t\leqslant 2T}  \left|\zeta^{(\ell)}(1 + it)\right| \gg \exp\Big\{ \frac{\log T}{\log_2 T}  (\log_3 T) -4\,\frac{\log T}{(\log_2 T)^2}   (\log_3 T)\Big\}.\]  This value is even larger than the  conditional upper bound of extreme value of the Riemann zeta function  on the $\frac{1}{2}-$line in the same interval $[T, 2T]$.  Recall that  Littlewood \cite{Littlewood} proved that the Riemann Hypothesis (RH) implies  the existence of a constant $C$ such that  for large $T$ we have $~~\max_{T \leqslant t\leqslant 2T}  \left|\zeta(\frac{1}{2} + it)\right|$ $\ll \exp\{C(\log T)$ $\cdot (\log_2 T)^{-1} \}$ .  Chandee and Soundararajan  \cite{CSo} proved that on RH, one can take any constant $C > (\log 2)/2.$

\end{remark}

\begin{theorem} \label{Main}
Let $\ell \in \mathbb{N}$ and $\beta \in [0, 1) $ be fixed.

\text{\emph{(A)}} \quad
Let $c$ be a positive number less than 
$\sqrt{2(1-\beta)}$. If $T$ is sufficiently large, then 
\begin{equation*} \label{eq:Half} \max_{T^{\beta}\leqslant t\leqslant T}\left|\zeta^{(\ell)}\Big(\frac{1}{2}+it\Big)\right| \geqslant  \exp  \Big\{c\sqrt{\frac{\log T \,\log_3 T}{\log_2 T}}\Big\}. \end{equation*}

\text{\emph{(B)}} \quad Let $\sigma \in (\frac{1}{2} , 1)$ be given and $\kappa$ be a positive number less than 
$1-\beta$. Then for  sufficiently large $T$, we have \begin{equation*} \label{eq:srtip} \max_{T^{\beta}\leqslant t\leqslant T}\left|\zeta^{(\ell)}\Big(\sigma+it\Big)\right| \geqslant   \exp  \Big\{\frac{\widetilde{c}\cdot\kappa^{1-\sigma}}{(1-\sigma)}\cdot  \frac{(\log T)^{1 - \sigma}}{(\log_2 T)^{\sigma}} \Big\}\,, \end{equation*}
  where $\widetilde{c}$ is an absolute positive constant.

\end{theorem}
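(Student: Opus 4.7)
The plan is to establish both parts via the resonance method. For a resonator of the form $R(t) = \sum_{n \leq N} r(n)\,n^{-it}$, with $r \geq 0$ multiplicative and supported on squarefree integers, and a smooth non-negative weight $\Phi$ supported in (essentially) $[T^\beta, T]$ with $\int \Phi \asymp T$, the key inequality reads
\[
\max_{T^\beta \leq t \leq T} \bigl|\zeta^{(\ell)}(\sigma + it)\bigr| \;\geq\; \frac{\bigl|\int \zeta^{(\ell)}(\sigma+it)\,|R(t)|^2\,\Phi(t)\,dt\bigr|}{\int |R(t)|^2\,\Phi(t)\,dt}.
\]
The numerator is expanded via an approximate Dirichlet polynomial $(-1)^\ell \sum_{n \leq Y}(\log n)^\ell\, n^{-\sigma-it}$ for $\zeta^{(\ell)}(\sigma+it)$ (together with a functional-equation dual when $\sigma = 1/2$); the dominant contribution is the diagonal $mn=k$, and for multiplicative $r$ the resulting ratio factors as an Euler product that one optimizes over the choice of $r$.

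For Part (A), I would implement the Bondarenko--Seip long-resonator method with the de la Bret\`eche--Tenenbaum refinement that produces the leading constant $\sqrt 2$ when $\beta = 0$. Choose $r(p) \asymp (\log_3 T / \log_2 T)^{1/2}$ on an appropriate window of primes, with $r$ supported so that the resonator length $N$ is a carefully chosen power of $T$. Since $\ell$ is fixed, the additional weight $(\log n)^\ell \leq (\log T)^\ell$ contributes only a polynomial-in-$\log T$ factor and does not affect the exponential scale. The restriction to $[T^\beta, T]$ enters twice: first, $N$ must satisfy $N \lesssim T^{1-\beta - o(1)}$ so that the diagonal dominates the denominator's mean value; second, the truncation length $Y$ must be valid uniformly down to $t \sim T^\beta$. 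These constraints cap the resource $\log N$ at $(1-\beta - o(1))\log T$, and running the Bondarenko--Seip optimization with this smaller budget scales the final constant by $\sqrt{1-\beta}$, so that $c$ can be taken arbitrarily close to $\sqrt{2(1-\beta)}$.

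For Part (B) in the strip $1/2 < \sigma < 1$, I would follow a Montgomery--Aistleitner style resonator. The Dirichlet polynomial approximation of $\zeta^{(\ell)}(\sigma+it)$ is now easier: a truncation at $Y \asymp T^\beta$ produces an error of order $Y^{1-\sigma}(\log Y)^\ell T^{-\beta} = o(1)$ since $\sigma > 1/2$. Take $r(p) \asymp (\log p)^\sigma / p^{1-\sigma}$ on primes $p \leq \exp\bigl(\alpha\,(\log T)/(\log_2 T)\bigr)$, with $\alpha$ optimized. The logarithm of the Euler-product ratio is essentially $\sum_{p \leq P} r(p)\, p^{-\sigma}$, and maximizing this subject to $\sum_p r(p)^2 \log p \lesssim (1-\beta - o(1))\log T$ reproduces the exponent $\widetilde c\, \kappa^{1-\sigma}/(1-\sigma)\cdot (\log T)^{1-\sigma}/(\log_2 T)^\sigma$ for any $\kappa < 1-\beta$. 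The weight $(\log n)^\ell$ again only contributes a harmless factor $\ll (\log T)^\ell$.

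The main technical obstacle is controlling the off-diagonal contributions to the numerator and denominator. For the long resonator of Part (A) this is the delicate heart of the Bondarenko--Seip method and requires a careful block decomposition of $r$ together with a weight $\Phi$ having rapid Fourier decay. A secondary obstacle is justifying the approximate Dirichlet polynomial for $\zeta^{(\ell)}$ uniformly on the lopsided interval $[T^\beta, T]$; this is precisely where the tension between the lower endpoint $T^\beta$ and the upper endpoint $T$ produces the factor $(1-\beta)$ in the final constants. By contrast, the passage from $\zeta$ to $\zeta^{(\ell)}$ for fixed $\ell$ is comparatively routine, since it only introduces polynomial-in-$\log T$ weights that do not affect the leading exponential rate.
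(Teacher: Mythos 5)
Your high-level plan (resonance method, GCD-sum constructions, de la Bret\`eche--Tenenbaum constant, trade-off between resonator length and the window $[T^\beta,T]$) agrees with the paper, but the core mechanism by which the numerator moment is actually computed is described incorrectly, and two ideas the paper relies on are missing from your outline.

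First, you propose to expand $\zeta^{(\ell)}(\sigma+it)$ inside the moment via a truncated Dirichlet polynomial ``together with a functional-equation dual when $\sigma=1/2$.'' That is Soundararajan's original device, and with a resonator of length $N=T^{1-\varepsilon}$ it yields only $\exp((1+o(1))\sqrt{\log T/\log_2 T})$ on the $\tfrac12$-line. To obtain the Bondarenko--Seip exponent $\exp\bigl(c\sqrt{\log T\,\log_3 T/\log_2 T}\bigr)$ one must instead use a convolution formula: integrate $\F_\ell(\sigma+it+iy)\F_\ell(\sigma-it+iy)K(y)\,dy$ for a kernel $K$ whose Fourier transform has compact support, obtaining a double sum in $\widehat K(\log mn)$ plus residue terms from the pole of $\zeta^{(\ell)}$ at $s=1$. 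The paper proves such a formula (its Lemma~\ref{lem:Convo_F}), bounds the residue contributions $\Delta^\pm$, and then compares the resulting quantity to the GCD sums. Your outline never mentions this; the claim that simply ``implementing the Bondarenko--Seip long-resonator method'' will work is correct in spirit but the actual numerator expansion you describe would not deliver it.

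Second, the paper deliberately works with $\F_\ell(s)=1+2^{-s}+(-1)^\ell\zeta^{(\ell)}(s)$ rather than $\zeta^{(\ell)}(s)$, precisely so that the Dirichlet coefficients $a_\ell(n)$ satisfy $a_\ell(n)\geqslant 1$ for all $n$. You assert that the extra weight $(\log n)^\ell$ ``contributes only a polynomial-in-$\log T$ factor,'' but this is only an \emph{upper} bound; the coefficient $(\log n)^\ell$ vanishes at $n=1$ and is small for small $n$, so the diagonal contribution is not automatically bounded below by $S_\sigma(\M)$. The device $a_\ell(n)\geqslant 1$ is exactly what makes the pointwise comparison to GCD sums go through, and your proposal gives no substitute.

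There are smaller inaccuracies as well: the resonator in the Bondarenko--Seip setup is not literally multiplicative but is built via a binning of $\M$ into intervals $\M_j$; and in Part (B) a truncation at $Y\asymp T^\beta$ cannot approximate $\zeta^{(\ell)}(\sigma+it)$ for $t$ up to $T$. These would need fixing, but the two gaps above -- the missing convolution formula and the missing ``coefficients $\geqslant1$'' normalization -- are what prevent the proposed argument from actually reaching the stated bounds.
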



The  research for extreme values of the Riemann zeta function has a long history.   In 1910,  Bohr and Landau  first established the result $\zeta(1+it) = \Omega(\log_2 t)$ (see \cite[Thm 8.5]{T}). In 1924,  Littlewood  (see \cite[Thm 8.9(A)]{T}) was able to find an explicit constant in the $\Omega$-result of  Bohr and Landau, by proving  that $\overline{\lim}_{t\to\infty} |\zeta(1+it)|/(\log_2 t) \geqslant e^{\gamma}.$  Littlewood's result was improved by Levinson \cite{L} in 1972, and by 
Granville-Soundararajan \cite{GS} in 2005. The curently best-known lower bound is established by Aistleitner-Mahatab-Munsch \cite{AMMun} in 2017,
who proved that 
$\max_{\sqrt T \leqslant t\leqslant T} \left|\zeta(1 + it)\right|  \geqslant e^{\gamma} (\log_2 T + \log_3 T - C),$
for some constant $C.$

On the other hand, when assuming Riemann hypothesis,  Littlewood proved that $|\zeta(1+it)| \leqslant (2 e^{\gamma} + o(1)) \log_2 t\,, $ for sufficiently large $t$ (see \cite[Thm 14.9]{T}). Furthermore, Littlewood conjectured that $ \max_{1 \leqslant t\leqslant T}  \left|\zeta(1 + it)\right| \sim e^{\gamma}\log_2 T .$   In \cite{GS},  Granville-Soundararajan  made the  stronger conjecture:\,
$
  \max_{T \leqslant t\leqslant 2T}  \left|\zeta(1 + it)\right| = e^{\gamma}(\log_2 T + \log_3 T + C_1) + o(1), 
$
for some constant $C_1$ which can be effectively computed.

Comparing  to the research on  extreme values of the  Riemann zeta function,    much less is known about the extreme values of its derivatives.   

It is still uncertain whether the methods of \cite{T, L, GS, AMMun} are able to establish the result in our Theorem \ref{Main: sigma =  1}, since those methods basically rely on the fact that the $k$-divisors function $d_k(n)$ is  multiplicative  and/or the fact that the Riemann  zeta function has a Euler product: $\zeta(s) = \prod_{p}(1- p^{-s})^{-1},\,$  $\Re(s)>1.$
Note that the function $f(n):\, = (\log n)^{\ell}$ is not multiplicative and the derivative $\zeta^{(\ell)} (s)$ does not have a Euler product.

We also emphasize that the key points in  Theorem \ref{Main: sigma =  1} are the range $\ell \leqslant (\log T)/(\log_2 T)$ and the constant in front of $(\log_2 T)^{\ell+1}$. In fact, one can  use the method of Bohr-Landau to prove a much weaker result, i.e., $\zeta^{(\ell)} (1+it) = \Omega((\log_2 t)^{\ell+1})$ when $\ell \in \mathbb N$ is fixed. See Section \ref{Short} for such a short proof.

We will use  Soundararajan's original resonance method  \cite{So} to prove  Theorem \ref{Main: sigma =  1}. The new ingredient for the proof is the following  Proposition \ref{maxRatio}.

\begin{proposition}\label{maxRatio}
  If $T$ is sufficiently large, then uniformly for all positive integers $\ell$, we have
\begin{equation*}
\max_{r} \Big|\sum_{mk = n\leqslant \sqrt T} \frac{r(m)\overline{r(n)}}{ k} (\log k)^{\ell}
\Big| \Big/ \Big(\sum_{n\leqslant \sqrt T }|r(n)|^2\Big) 
\geqslant \frac{e^{\gamma}}{\ell} \cdot\Big(\frac{\ell}{\ell +1}\Big)^{\ell+1} \cdot \Big(\log_2 T - \log_3 T + O(1)\Big)^{\ell+1},
\end{equation*}
where the maximum is taken over all functions $r :\, \mathbb N \to \mathbb C$ satisfying that the denominator is not equal to zero, when the parameter $T$ is given.
\end{proposition}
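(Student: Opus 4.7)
The plan is to apply Soundararajan's resonance method (following \cite{So,AMMun}) with a carefully chosen multiplicative resonator. Set $y := (\log T)/\log_2 T$ so that $\log y = \log_2 T - \log_3 T + O(1)$, and take $r$ to be the multiplicative function supported on squarefree integers all of whose prime factors lie in $[2, y]$, with $r(p) := f(p)$ for a non-negative weight $f$ on primes. The optimal $f$ will be selected at the end of the argument.

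The first step exploits multiplicativity. Since $r$ is supported on squarefree integers, the condition $mk = n$ forces $\gcd(m, k) = 1$ whenever $r(m)\overline{r(n)} \neq 0$, and then $r(mk) = r(m) r(k)$. After separating the $m$-sum from the $k$-sum and temporarily discarding the constraint $mk \leqslant \sqrt T$, the ratio factorizes as
\begin{equation*}
\frac{M_1^{\mathrm{un}}}{M_2^{\mathrm{un}}} \;=\; \sum_{\substack{k \text{ squarefree}\\ p \mid k \,\Rightarrow\, p \leqslant y}} \frac{r(k)(\log k)^\ell}{k}\prod_{p \mid k}\frac{1}{1 + f(p)^2} \;=\; F^{(\ell)}(0),
\end{equation*}
where $F(s) := \prod_{p \leqslant y}\big(1 + f(p)\,p^{s - 1}/(1 + f(p)^2)\big)$ is an auxiliary Euler product encoding the combinatorics of the convolution.

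The second step evaluates $F^{(\ell)}(0)$ asymptotically via Fa\`a di Bruno applied to $\phi(s) := \log F(s)$: each derivative $\phi^{(j)}(0)$ reduces to a sum $\sum_{p \leqslant y}(\log p)^j f(p)/(p(1 + f(p)^2))$, which by partial summation from Mertens' first theorem $\sum_{p \leqslant t}(\log p)/p = \log t + O(1)$ behaves like $(\log y)^j/j$ up to a prefactor determined by $f$, so that the Bell polynomial expansion of $F^{(\ell)}/F$ produces a polynomial in $\log y$ of degree $\ell + 1$. The third step fixes $f$: the coefficient of the leading term $(\log y)^{\ell + 1}$ reduces to the variational problem $\max_{a > 0} a^\ell/(1 + a)^{\ell + 1} = \ell^\ell/(\ell + 1)^{\ell + 1}$, attained at $a = \ell$. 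Finally Mertens' third theorem $\prod_{p \leqslant y}(1 - 1/p)^{-1} \sim e^\gamma \log y$ governs the normalization of $\prod_{p \leqslant y}(1 + f(p)^2)$ and supplies the overall $e^\gamma$.

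Two matters remain: one must check that restoring the size constraint $mk \leqslant \sqrt T$ costs only a negligible fraction of the main term, which follows because the bulk of the mass of $\sum_k r(k)(\log k)^\ell/k$ concentrates on $k$ of polylogarithmic size in $T$, well below $\log\sqrt T = (\log T)/2$; and one must track uniformity in $\ell$. The principal obstacle is the third step: isolating the sharp constant $e^\gamma \ell^\ell/(\ell + 1)^{\ell + 1}$ requires a careful Bell polynomial computation intertwined with the variational optimization over the primewise weights $f(p)$, and propagating Mertens-quality error estimates through $\ell + 1$ nested derivatives uniformly in the range $\ell \leqslant (\log T)/\log_2 T$ is the delicate part of the argument.
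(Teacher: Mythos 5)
There is a genuine gap: the squarefree resonator cannot achieve the claimed $(\log_2 T)^{\ell+1}$ growth, let alone the sharp constant $e^\gamma \ell^\ell(\ell+1)^{-(\ell+1)}$. Your factorization of the unconstrained ratio as $F^{(\ell)}(0)$, with $F(s) = \prod_{p\leqslant y}\bigl(1 + f(p)p^{s-1}/(1+f(p)^2)\bigr)$, is correct, but this quantity is structurally too small. Indeed
\[
F(0) = \prod_{p\leqslant y}\Bigl(1 + \frac{f(p)}{p(1+f(p)^2)}\Bigr) \asymp (\log y)^{\kappa}, \qquad \kappa = \lim_{y\to\infty}\frac{1}{\log_2 y}\sum_{p\leqslant y}\frac{f(p)}{p(1+f(p)^2)},
\]
and since $f/(1+f^2)\leqslant 1/2$ pointwise (equality only at $f=1$), one always has $\kappa \leqslant 1/2$. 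The Bell-polynomial step contributes exactly $(\log y)^\ell$ on top of $F(0)$ (each $\phi^{(j)}(0)\asymp (\log y)^j/j$ up to a constant prefactor), so $F^{(\ell)}(0)\ll (\log y)^{\ell+1/2}$ for every admissible weight $f$ --- a full factor $(\log y)^{1/2}$ short of the target $(\log y)^{\ell+1}$. In particular the appeal to Mertens' third theorem to ``supply the overall $e^\gamma$'' does not work: $\prod_{p\leqslant y}(1+f(p)^2)$ is the denominator mass, not a Mertens product, and there is no choice of $f$ that makes the numerator Euler product approach $\prod(1-1/p)^{-1}$.

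The paper sidesteps this obstruction by abandoning squarefreeness. It takes $r$ to be the indicator of the divisor set of $\PP(x,b)=\prod_{p\leqslant x}p^{b-1}$ with $b=[\log_2 T]$, so that each local factor $1+\sum_{\nu=1}^{b-1}(1-\nu/b)p^{-\nu}$ tends, as $b\to\infty$, to $(1-1/p)^{-1}$ and the full Mertens product $e^\gamma\log x$ is recovered (up to $O(1/b)$). The sharp constant then comes from a completely different mechanism than your variational/Bell-polynomial calculation: one splits the divisors $k$ of $\PP(x,b)$ according to whether $k$ divides $\PP(x^\delta,b)$, observes that the complementary part carries a $(1-\delta)$-fraction of the Mertens product while every $k$ in it satisfies $\log k\geqslant \delta\log x$, and optimizes $\delta = \ell/(\ell+1)$. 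No derivatives of Euler products and no Faà di Bruno expansion are needed. Your variational expression $\max_{a>0} a^\ell/(1+a)^{\ell+1}=\ell^\ell/(\ell+1)^{\ell+1}$ is the right number, but it arises in the paper as the optimization of $\delta^\ell(1-\delta)$ over the splitting threshold, not as a coefficient in a Bell polynomial --- and in your setting that coefficient does not in fact appear, because the power of $\log y$ is already wrong.
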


The following Proposition \ref{GCD_log} will not be used to prove  our theorems. But it is closely related to  Proposition \ref{maxRatio} and  can be viewed as a ``log-type'' GCD sum, so we list it here for independent interest.

\begin{proposition}\label{GCD_log}
Let $\ell \in \mathbb{N}$  and let $c_{\ell}$ be a positive number less than  $\,\,6 e^{2\gamma}\pi^{-2}\cdot \ell^{2\ell}\cdot (2\ell+1)^{ -(2\ell+1)}\,.$\,  For sufficiently large $N$, we have 
\begin{equation*}
\max_{|\M| = N} \sum_{m, n\in \M} \frac{(m,n)}{[m,n]}\log^{\ell} \Big(\frac{m}{(m,n)}\Big)\log^{\ell}\Big(\frac{n}{(m,n)}\Big) \geqslant  c_{\ell} \cdot N\cdot(\log_2 N)^{2+2\ell}\,,
\end{equation*}
where the maximum is taken over all subsets   $\M \subset \mathbb N$ with size $N$.
\end{proposition}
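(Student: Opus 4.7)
The plan is to adapt the resonance construction used in Proposition~\ref{maxRatio} to the GCD-sum setting. Since the summand $(m,n)/[m,n] \cdot \log^\ell(m/(m,n))\log^\ell(n/(m,n))$ is non-negative and multiplicatively structured, a natural choice is to take $\M$ to be a multiplicatively structured set: for instance, the integers of the form $n = \prod_{p \in \mathcal{P}} p^{e_p}$ with $0 \leq e_p \leq K$, where $\mathcal{P}$ is a finite set of primes and $K$ is a positive integer, with $(K+1)^{|\mathcal{P}|}$ approximately $N$ (the cardinality can be adjusted to exactly $N$ by padding, since adding elements only enlarges the non-negative sum). Taking $\mathcal{P}$ to be the primes in $[1, y]$ with $y$ suitably chosen gives $\log y \sim \log_2 N$.

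For such $\M$, I would introduce formal variables $u, v$ to encode the logarithmic weights via $e^{u \log(m/(m,n))} = (m/(m,n))^u$. The full GCD sum then coincides with $\partial_u^\ell \partial_v^\ell G(u,v)|_{u=v=0}$, where the generating function factors as the Euler product
$$G(u,v) = \prod_{p \in \mathcal{P}} H_p(u,v), \qquad H_p(u,v) = \sum_{a,b=0}^{K} p^{-|a-b|}\, p^{u(a-b)_+ + v(b-a)_+}.$$
Expand the mixed partial via the Leibniz rule, distributing the $\ell$ $u$-derivatives and $\ell$ $v$-derivatives across the primes. Because each $H_p$ contains no mixed $uv$-term, each prime must absorb its derivatives exclusively in $u$ or exclusively in $v$. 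The ``dispersed'' main contribution—one $u$-derivative on each of $\ell$ primes and one $v$-derivative on $\ell$ further disjoint primes—yields a leading term of order $G(0,0) \cdot \alpha^{2\ell}$, where $\alpha = \sum_{p \in \mathcal{P}} \partial_u H_p(0,0)/H_p(0,0)$.

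The next step is to evaluate via Mertens-type asymptotics. For $K \to \infty$, $H_p(0,0)/(K+1) \to (p+1)/(p-1)$, so $\prod_{p \leq y} H_p(0,0)/(K+1) \sim (6 e^{2\gamma}/\pi^2)(\log y)^2$, and $\alpha \sim \sum_{p \leq y} \log p/p \sim \log y$ by the prime number theorem. To recover the sharp constant $6 e^{2\gamma}\pi^{-2}\cdot \ell^{2\ell}/(2\ell+1)^{2\ell+1}$, I would refine the construction by partitioning $\mathcal{P}$ into three disjoint prime ranges with log-scale proportions $(a, b, c)$ summing to $1$, reserved respectively for the background product, the $u$-derivatives, and the $v$-derivatives. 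The elementary optimization $\max\{a b^{\ell} c^{\ell} : a + b + c = 1,\; a,b,c \geq 0\} = \ell^{2\ell}/(2\ell+1)^{2\ell+1}$ then emerges naturally and produces the stated constant, with the exponent $(\log_2 N)^{2\ell+2}$ arising from the product of the quadratic Mertens factor $(\log y)^2$ and the $2\ell$-fold factor $\alpha^{2\ell}$.

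The main technical obstacle is controlling the ``clustered'' contributions in the Leibniz expansion, where two or more derivatives fall on a single prime. In this setting $\partial_u^2 \log G(0,0) \sim (\log y)^2$ is of the same order as $\alpha^2$, so these subleading distributions contribute corrections comparable to the dispersed main term and must be summed carefully—for instance via Bell-polynomial or Hermite-polynomial identities, which amount to evaluating the mixed partial of a two-dimensional Gaussian-type generating function. A secondary difficulty is verifying the three-range decomposition's optimality and ensuring the Mertens-type estimates are uniform across the parameters $(a, b, c)$ as they approach the optimal proportions $(1, \ell, \ell)/(2\ell+1)$.
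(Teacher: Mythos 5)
Your generating-function route---encoding the logarithmic weights via $(m/(m,n))^u (n/(m,n))^v$, expanding the mixed partial $\partial_u^\ell\partial_v^\ell G(0,0)$ by Leibniz, and observing that no $H_p$ carries a mixed $uv$-term---is sound and genuinely different from the paper's argument. In fact you have the essential pieces in hand and then get stuck on a problem you do not need to solve. Every term in the Leibniz expansion is non-negative: for each prime, $\partial_u^m H_p(0,0)=\log^m p\sum_{a>b}(a-b)^m p^{-(a-b)}\geqslant 0$, and likewise for $\partial_v^n H_p$, so the full mixed partial dominates any single Leibniz subfamily. The dispersed family alone therefore furnishes a valid lower bound, $\partial_u^\ell\partial_v^\ell G(0,0)\geqslant (1+o(1))\,G(0,0)\,\alpha^{2\ell}$ with $\alpha\sim\log_2 N$, and this already gives the constant $6e^{2\gamma}\pi^{-2}$ in front of $N(\log_2 N)^{2+2\ell}$---strictly larger than the stated target $6e^{2\gamma}\pi^{-2}\,\ell^{2\ell}(2\ell+1)^{-(2\ell+1)}$. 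The ``clustered'' contributions you worry about only matter for an asymptotic or upper bound, not for the lower bound the proposition asks for, and the three-range decomposition you introduce to recover the factor $\ell^{2\ell}(2\ell+1)^{-(2\ell+1)}$ is both unnecessary and counterproductive: that factor is simply a loss inherent in the paper's more elementary method, not a quantity to reverse-engineer.

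The paper's proof takes the same Gál set $\M$ (divisors of $\PP(\rr,b)$) but avoids generating functions entirely. It first evaluates the unweighted sum $\sum_{m,n\in\M}(m,n)/[m,n]$ by Gál's identity and Mertens, giving $(1+o(1))\,\tfrac{6}{\pi^2}e^{2\gamma}N(\log_2 N)^2$. It then partitions $\M\times\M$ into $\M_\delta$ and $\overline{\M_\delta}$, where $\delta=\ell/(2\ell+1)$ and $\M_\delta$ consists of pairs for which, for every prime index $i>\rr^\delta$, one fixed coordinate's exponent is the minimum; a second application of Gál's identity shows $\M_\delta$ carries at most a $2\delta+o(1)$ fraction of the unweighted mass. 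For any pair in $\overline{\M_\delta}$, some prime $p_i$ with $i>\rr^\delta$ divides both $m/(m,n)$ and (for some other index) $n/(m,n)$, giving the pointwise bounds $\log(m/(m,n)),\,\log(n/(m,n))\geqslant(\delta+o(1))\log_2 N$. Multiplying the surviving mass $(1-2\delta)$ by $\delta^{2\ell}$ and optimizing over $\delta$ produces the factor $\ell^{2\ell}(2\ell+1)^{-(2\ell+1)}$. This is cleaner and needs no combinatorial control of derivative distributions, at the cost of a constant that your dispersed-term estimate would, if carried through, improve.
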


\begin{remark}
Actually we can also use Proposition \ref{GCD_log} and  Hilberdink's version of the resonance method \cite{Hi} to prove a similar result to the one in Theorem \ref{Main: sigma =  1}. But the constant in front of  $\,\,(\log_2 T)^{\ell+1}$ will be much worse. 
\end{remark}
Soundararajan introduced  his resonance method in \cite{So} and proved that \[ \max_{T \leqslant t \leqslant 2T} \left|\zeta\Big(\frac{1}{2}+it\Big)\right| \geqslant \exp\left((1+o(1))\sqrt{\frac{\log T }{\log\log T}}\right),\]
which improved earlier results of  Montgomery  and Balasubramanian-Ramachandra.  Montgomery \cite{M} proved it under RH and with the constant $1/20$ instead of $1+o(1) $ in Soundararajan's result.   Balasubramanian-Ramachandra \cite{BR} proved the result unconditionally but also with a smaller constant compared to Soundararajan's result.   

By constructing large GCD sums,  Aistleitner \cite{A} used a modified version of Soundararajan's resonance method to establish lower bounds for maximum of $|\zeta(\sigma+it)|$  when $\sigma \in (1/2, 1)$ is fixed. He proved that $$\max_{0 \leqslant t \leqslant T} \left|\zeta\Big(\sigma+it\Big)\right| \geqslant \exp\left(\frac{c_{\sigma}(\log T)^{1-\sigma} }{(\log\log T)^{\sigma}}\right),$$
for large $T$, and one can take $c_{\sigma} = 0.18 (2\sigma-1)^{1-\sigma}.$ 
The same result has been proved by Montgomery in \cite{M} with a smaller value for  $c_{\sigma} $.  In \cite{BSNote}, Bondarenko and  Seip improved the value $c_{\sigma} $ in Aistleitner's result.

By constructing large GCD sums, using a convolution formula for $\zeta$ in the resonance method,  Bondarenko and  Seip \cite{BS1, BS2} proved the following surprising result:\[ \max_{1 \leqslant t \leqslant T} \left|\zeta\Big(\frac{1}{2}+it\Big)\right| \geqslant \exp\left((1+o(1))\sqrt{\frac{\log T \log_3 T}{\log_2 T}}\right).\] 

After optimizing the GCD sums, de la Bret\`eche and Tenenbaum \cite{delaBT}  improved the factor from $(1 + o(1))$   to $ (\sqrt{2} + o(1)) $ in the above result.  

Following the work of Bondarenko-Seip and 
de la Bret\`eche-Tenenbaum, we use their modified versions of resonance methods to prove  Theorem \ref{Main}. The new ingredient is our convolution formula for $1 + 2^{-s} + (-1)^{\ell}\zeta^{(\ell)}(s).$   
Throughout the paper, define the function $\F_{\ell}(s)$ as follows:
\begin{equation}\label{F_l}
    \F_{\ell}(s):\, = 1 + \frac{1}{2^s} + (-1)^{\ell}\zeta^{(\ell)}(s).
\end{equation}
Throughout the paper, also define the sequence $\{ a_{\ell }(n)\}_{n =1}^{\infty}$ as  $a_{\ell }(1) = 1, a_{\ell }(2) = 1 + (\log 2)^{\ell}$,  and $a_{\ell }(n) = (\log n)^{\ell}$ for $n \geqslant 3$. Then we have the following identity and the Dirichlet series converge absolutely 
\begin{equation}\label{DirichletS}
    \F_{\ell}(s) =\sum_{n=1}
^{\infty} \frac{a_{\ell }(n)}{n^s}, \quad \quad \Re(s)  > 1.
\end{equation}

The reason why we add the part $1 + 2^{-s}  $ is that we want to make $a_{\ell}(n) \geqslant 1$ for all $n \geqslant 1.$ Since when $\sigma \in [1/2, 1)$, the factor $(\log n)^{\ell}$ has very small influence on the log-type GCD sums compared  to the case $\sigma =1$, we will simply use the fact that $a_{\ell}(n) \geqslant 1$ and then come to the situation of optimizing GCD sums.


Let $  \sigma \in (0, 1]$ be given and let $\M \subset \N$ be a finite set. The greatest common divisors (GCD) sums $S_{\sigma}(\M)$ of $\M$ are defined as follows:
\begin{align*}
   S_{\sigma}(\M):\,=\sum_{m, n \in \mathcal{M}}\frac{(m,n)^{\sigma}}{[m, n]^{\sigma}}\,\,\, , \quad
\end{align*}
where $(m, n)$ denotes the greatest common divisor of $m$ and $n$ and $[m, n]$ denotes the least common multiple of $m$ and $n$.

The case $\sigma = 1$ was  studied by G\'{a}l \cite{G}, who proved that
\begin{align}\label{Gal} (\log_2 N)^2 \ll \max_{|\M| = N}  \frac{ S_{1}(\M)}{|\M|} \ll (\log_2 N)^2.\end{align}

The asymptotically sharp constant in \eqref{Gal} is $6 e^{2\gamma}\pi^{-2}$.  This fact was  proved by Lewko and Radziwi\l\l  \,\, in \cite{Lewko}.

Bondarenko and Seip  \cite{BS, BS1} proved the following  result for  GCD sums when $\sigma = \frac{1}{2}:$ 
\begin{align*}
\emph{\emph{exp}}  \Big\{\big(1 +o(1)\big)\sqrt{\frac{\log N \,\log_3 N}{\log_2 N}}\Big\} \ll  \max_{|\M| = N}  \frac{ S_{\frac{1}{2}}(\M)}{|\M|}  \ll  \emph{\emph{exp}}  \Big\{\big(7 +o(1)\big)\sqrt{\frac{\log N \,\log_3 N}{\log_2 N}}\Big\}.
\end{align*}

Later, based on constructions of \cite{BS, BS1}, de la Bret\`eche and Tenenbaum \cite{delaBT} optimized the result of Bondarenko-Seip and obtained the following:
\begin{align}\label{GCD: 1/2}
\max_{|\M| = N}  \frac{ S_{\frac{1}{2}}(\M)}{|\M|} = \emph{\emph{exp}}  \Big\{\big(2\sqrt{2} +o(1)\big)\sqrt{\frac{\log N \,\log_3 N}{\log_2 N}}\Big\}.
\end{align}

 Aistleitner, Berkes, and Seip \cite{ABS}
 proved the following essentially optimal result for GCD sums when $\sigma \in (\frac{1}{2}, 1),$ where  $c_{\sigma}$ and $C_{\sigma}$ are positive constants only depending on $\sigma:$
\begin{align}
\emph{\emph{exp}}  \Big\{c_{\sigma}\cdot  \frac{(\log N)^{1 - \sigma}}{(\log_2 N)^{\sigma}}\Big\} \ll  \max_{|\M| = N}  \frac{ S_{\sigma}(\M)}{|\M|} \ll \emph{\emph{exp}}  \Big\{C_{\sigma}\cdot  \frac{(\log N)^{1 - \sigma}}{(\log_2 N)^{\sigma}}\Big\}.
\end{align}
 
Moreover, in \cite[page 1526]{ABS}, they also gave an example (following ideas of \cite{G}) for the lower bound when $\sigma \in (\frac{1}{2}, 1)$. Let $N=2^r$ and let $\M$ be the set of   all square-free integers composed of the first $r$ primes. Then
 \begin{align}\label{GCD: sigma}
 \max_{|\M| = N}  \frac{ S_{\sigma}(\M)}{|\M|} \gg \emph{\emph{exp}}  \Big\{\frac{\widetilde{c}}{1-\sigma}\cdot  \frac{(\log N)^{1 - \sigma}}{(\log_2 N)^{\sigma}}\Big\}
\end{align}
 for some positive constant $\widetilde{c}$. For simplicity, in our proof we will use this construction. For more  constructions, see Bondarenko-Seip \cite{BSNote}.

\section{Lemmas for the Riemann zeta function}

\begin{lemma}\label{approx}
 Let  $\sigma_0 \in (0, 1)$ be fixed. If $T$ is sufficiently large,
then uniformly for $\varepsilon >0$, $t \in [T, 2T]$, $\sigma \in[ \sigma_0 + \varepsilon, \, \infty)$ and all positive integers $\ell$, we have
\begin{equation}
   (-1)^{\ell} \zeta^{(\ell)}(\sigma+ it) = \sum_{n \leqslant T} \frac{(\log n)^{\ell}}{n^{\sigma+it}} + O\Big(  \frac{\ell !}{\varepsilon^{\ell}}\cdot T^{-\sigma+\epsilon}\Big)\,,
\end{equation}
where the implied constant in big $O(\cdot)$ only depends on $\sigma_0$ .
\end{lemma}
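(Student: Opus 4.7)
The plan is to deduce the statement from the classical first-order approximation
\[
\zeta(w) \;=\; \sum_{n\leqslant T} n^{-w} \;+\; O_{\sigma_0}(T^{-\Real(w)})
\]
(valid uniformly for $\Real(w)\geqslant \sigma_0$ and $|\Im(w)|\asymp T$; see e.g.\ Titchmarsh, Chapter~IV) by transferring it to each of the derivatives via Cauchy's integral formula. The advantage of the Cauchy route is that the uniformity in $\ell$, and specifically the factor $\ell!/\varepsilon^{\ell}$, appears automatically from the size of the integration kernel.

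Concretely, I would set
\[
g(w) \;:=\; \zeta(w) \;-\; \sum_{n\leqslant T} n^{-w},
\]
which is holomorphic in a neighborhood of $s=\sigma+it$ (the only pole of $\zeta$ is at $w=1$, which is far from $s$ since $t\asymp T$ is large). Differentiating the finite Dirichlet sum term by term,
\[
(-1)^{\ell}\, g^{(\ell)}(s) \;=\; (-1)^{\ell}\,\zeta^{(\ell)}(s) \;-\; \sum_{n\leqslant T} \frac{(\log n)^{\ell}}{n^{s}},
\]
so everything reduces to bounding $g^{(\ell)}(s)$.

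For the bound, I would use Cauchy's integral formula on the circle $C_{\varepsilon}$ of radius $\varepsilon$ centered at $s$:
\[
g^{(\ell)}(s) \;=\; \frac{\ell!}{2\pi i}\oint_{C_{\varepsilon}} \frac{g(w)}{(w-s)^{\ell+1}}\,dw.
\]
The restriction $\sigma\geqslant \sigma_0+\varepsilon$ in the hypothesis is precisely what is needed: it ensures $\Real(w)\geqslant \sigma_0$ for every $w\in C_{\varepsilon}$, so the classical approximate formula applies on the whole circle and gives
\[
|g(w)| \;\ll_{\sigma_0}\; T^{-\Real(w)} \;\leqslant\; T^{-\sigma+\varepsilon}
\]
on $C_{\varepsilon}$. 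A trivial estimate of the contour integral then yields
\[
|g^{(\ell)}(s)| \;\leqslant\; \frac{\ell!}{2\pi}\cdot\frac{2\pi\varepsilon}{\varepsilon^{\ell+1}}\cdot T^{-\sigma+\varepsilon} \;=\; \frac{\ell!}{\varepsilon^{\ell}}\,T^{-\sigma+\varepsilon},
\]
matching the claimed error term.

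The main (and only mild) point requiring care is checking that the background approximation for $\zeta$ holds uniformly on the entire circle $C_{\varepsilon}$, not just at its center; but this is automatic from the standard references, since on $C_{\varepsilon}$ the real part stays in $[\sigma_0,\infty)$ and the imaginary part stays in an interval of the same order of magnitude as $T$ (once $T$ is sufficiently large, which also guarantees $C_{\varepsilon}$ avoids the pole at $w=1$). No delicate arithmetic is needed: the factorial growth of $\ell!$ and the singular dependence on $\varepsilon$ are paid for entirely by the Cauchy kernel $(w-s)^{-\ell-1}$.
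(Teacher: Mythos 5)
Your proof is correct and follows exactly the route the paper indicates: the paper's proof is a one-line sketch citing Hardy--Littlewood's approximate formula for $\zeta$ (Titchmarsh, Thm.\ 4.11, with the $x^{1-s}/(1-s)$ term absorbed into the error since $|\Im w|\asymp T$) together with Cauchy's integral formula for derivatives, and your argument fills in precisely those two steps with the radius-$\varepsilon$ contour producing the $\ell!/\varepsilon^{\ell}$ factor. No substantive difference.
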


\begin{proof}
It follows from  Hardy-Littlewood's classical approximation formula (see \cite[Thm 4.11]{T}) for $\zeta(s)$ and Cauchy's integral formula for derivatives.
\end{proof}

\begin{lemma}\label{Estimate: ZetaDeriva}
Let $\ell \in \mathbb{N}$ and $\epsilon \in (0, 1)$ be fixed.
Then uniformly for all   $|t| \geqslant 1$ and $\sigma \in [-\epsilon, 1+\epsilon]$, 
\begin{equation}
    \zeta^{(\ell)}(\sigma + it) \ll |t|^{\frac{1 - \sigma + 3 \epsilon}{2}} ,
\end{equation}

where the implied constant depends on $\ell$ and $\epsilon$ only.

\end{lemma}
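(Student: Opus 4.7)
The plan is to reduce the derivative bound to the classical convexity estimate for $\zeta$ itself via Cauchy's integral formula. First I would recall the standard convexity bound: the functional equation combined with Stirling gives $\zeta(\sigma+it) \ll |t|^{1/2-\sigma}\log|t|$ for $\sigma \leqslant 0$, while $\zeta(1+it) \ll \log|t|$, and then the Phragmén–Lindelöf principle interpolates to
\[
    \zeta(\sigma+it) \ll_{\eta} |t|^{(1-\sigma)/2 + \eta}
\]
uniformly for $\sigma \in [-\eta,1+\eta]$ and $|t|\geqslant 1$, for any fixed $\eta>0$. This is the key input; everything else is formal.

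Next I would fix $\eta = \epsilon/3$ (or similar) and express $\zeta^{(\ell)}(\sigma+it)$ through Cauchy's formula on a small circle:
\[
    \zeta^{(\ell)}(\sigma+it) = \frac{\ell!}{2\pi i}\oint_{|w-(\sigma+it)|=\rho} \frac{\zeta(w)}{(w-\sigma-it)^{\ell+1}}\,\dif w,
\]
with $\rho := \epsilon/2$ (say). Since $\sigma \in [-\epsilon,1+\epsilon]$, every $w$ on the contour satisfies $\Real(w) \in [-3\epsilon/2,\,1+3\epsilon/2]$ and $|\Imag(w)| \asymp |t|$ for $|t|$ large (and the range $|t|\geqslant 1$ with $\ell,\epsilon$ fixed can be handled trivially for bounded $|t|$). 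Applying the convexity bound above on the contour gives, uniformly in $w$,
\[
    \zeta(w) \ll |t|^{(1 - \Real(w))/2 + \eta} \leqslant |t|^{(1 - \sigma + \rho)/2 + \eta}.
\]

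Combining these yields
\[
    \zeta^{(\ell)}(\sigma+it) \ll \frac{\ell!}{\rho^{\ell}}\cdot |t|^{(1-\sigma+\rho)/2 + \eta} \ll_{\ell,\epsilon} |t|^{(1-\sigma+3\epsilon)/2},
\]
provided the small parameters $\rho$ and $\eta$ are chosen so that $\rho/2 + \eta \leqslant 3\epsilon/2$, which holds for the choices above. This gives the stated inequality with an implied constant depending only on $\ell$ and $\epsilon$.

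I do not anticipate any real obstacle: the only delicate point is bookkeeping of the exponent, namely checking that the shift $\rho$ in the real part on the Cauchy contour, plus the Phragmén–Lindelöf slack $\eta$, can together be absorbed into the $3\epsilon$ term in the target exponent. The factor $\ell!/\rho^{\ell}$ is harmless because $\ell$ and $\epsilon$ are fixed. If one wanted a bound uniform in $\ell$ (as in Lemma~\ref{approx}), the choice of $\rho$ would interact with $\ell$, but the statement here only requires dependence on $\ell$, so the argument is straightforward.
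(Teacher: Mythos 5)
Your approach is exactly the paper's: the paper's proof of this lemma is a one-line appeal to ``classical convex estimates for $\zeta(s)$ and Cauchy's integral formula,'' and you have correctly fleshed that out. One small bookkeeping slip worth flagging: you fix the convexity-slack parameter $\eta=\epsilon/3$, but the Cauchy contour of radius $\rho=\epsilon/2$ around $\sigma+it$ has $\Real(w)\in[-\epsilon-\rho,\,1+\epsilon+\rho]=[-3\epsilon/2,\,1+3\epsilon/2]$, so the Phragm\'en--Lindel\"of strip you invoke (namely $[-\eta,1+\eta]$) does not contain the contour; $\eta$ is not a free small parameter here but must be at least $\epsilon+\rho$ for the strip to cover the contour. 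With $\eta_0=\epsilon+\rho$ the interpolated exponent at $\Real(w)$ is $(1-\Real(w))/2+\eta_0/2$ (the Phragm\'en--Lindel\"of loss is $\eta_0/2$, not $\eta_0$, since the left-edge bound is $|t|^{1/2+\eta_0}$ and the right-edge bound is $O(1)$), which on the contour gives $\zeta(w)\ll_\delta |t|^{(1-\sigma)/2+\rho+\epsilon/2+\delta}$; Cauchy then yields the claim provided $\rho+\epsilon/2<3\epsilon/2$, i.e.\ $\rho<\epsilon$, so $\rho=\epsilon/2$ is fine. The argument is sound once the convexity bound is applied on the correct (wider) strip.
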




\begin{proof}
It follows from classical convex estimates for $\zeta(s)$ and Cauchy's integral formula.
\end{proof}

In the following, we will derive a ``double version" convolution formula, similar to Lemma of 5.3 of de la Bret\`eche and Tenenbaum \cite{delaBT}. The proof is same as the proof of   ``single version" convolution formulas in Lemma 1 of   Bondarenko and  Seip \cite{BS2}.

Define the Fourier transform $\widehat{K}$ of $K$  as   
\[ \widehat{K}(\xi):\,=\int_{-\infty}^{\infty} K(x) e^{-ix\xi} dx. \]

\begin{lemma}\label{lem:Convo_F}
Let  $\ell \in \mathbb{N}$ and $\sigma\in [0, 1)$ be fixed. Write $z = x +i y.$ Assume that $K(z)$ is a holomorphic function in the strip $y=\Im  z\in [\sigma-2,0]$, satisfying the growth condition 

\begin{equation} \label{Kernel}
\max_{ \sigma-2\leqslant y\leqslant 0}\big|K(z)\big| = O(\frac{1}{x^2+1}).
\end{equation}
If $t\in \mathbb{R} \setminus \{0\}$, then

\begin{align} \label{ConvFom}
\int_{-\infty}^{\infty} \F_{\ell}(\sigma+it+iy) \F_{\ell}(\sigma-it+iy) K(y) dy
= &  \sum_{m, n \geqslant 1} \frac{\widehat{K}(\log nm)}{n^{\sigma+it} \cdot m^{\sigma-it}} a_{\ell}(n)a_{\ell}(m)   \\
\nonumber    & - 2\pi ( \Delta^{+} + \Delta^{-}) \ell! \end{align}

where 
\begin{align}
 \label{ConvResd}
 \Delta^{+} = \sum_{\substack{  m + n =\ell \\ m, n \geqslant 0}} \frac{1}{m! n!} (\frac{d}{dz})^m \F_{\ell}(z+it)\Big |_{z = 1 + it} \cdot (\frac{d}{dz})^n K(i \sigma - iz)\Big |_{z = 1 + it}
\end{align}

and 
\begin{equation}\label{ConvResd2}
\Delta^{-} = \sum_{\substack{  m + n =\ell \\ m, n \geqslant 0}} \frac{1}{m! n!} (\frac{d}{dz})^m \F_{\ell}(z-it)\Big |_{z = 1 - it} \cdot (\frac{d}{dz})^n K(i \sigma - iz)\Big |_{z = 1 - it}.
\end{equation}
 
\medskip

\end{lemma}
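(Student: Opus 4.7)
The plan is to evaluate the left-hand side of \eqref{ConvFom} by shifting the contour of integration from the real axis down to the line $\Im z = \sigma - 2$, collecting the two residues of order $\ell+1$ produced inside the strip, and identifying the shifted integral with the double Dirichlet series on the right via Fourier inversion of $K$. This mirrors the proof of the single-variable convolution formula (Lemma~1 of \cite{BS2}), with the new feature that both factors $\F_\ell(\sigma \pm it + iz)$ now contribute poles.

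I would begin by recording that $\F_\ell(s)$ is meromorphic on $\C$ with a single pole of order $\ell+1$ at $s = 1$, inheriting the principal part $\ell!/(s-1)^{\ell+1}$ from $(-1)^\ell \zeta^{(\ell)}(s)$. Setting $z_{\pm} := \pm t - i(1-\sigma)$, the factor $\F_\ell(\sigma - it + iz)$ has its pole at $z_+$ and $\F_\ell(\sigma + it + iz)$ has its pole at $z_-$; both lie strictly inside the open strip $\sigma - 2 < \Im z < 0$, since $t \neq 0$ and $\sigma \in [0, 1)$. The decay hypothesis \eqref{Kernel} on $K$, combined with Lemma~\ref{Estimate: ZetaDeriva}, renders the vertical sides of a rectangle $[-R, R] \times [\sigma - 2, 0]$ negligible as $R \to \infty$, and Cauchy's theorem yields
\begin{equation*}
\int_{-\infty}^{\infty} \F_\ell(\sigma + it + iy)\F_\ell(\sigma - it + iy) K(y)\,dy = I_{\sigma - 2} - 2\pi i \bigl( \mathrm{Res}_{z_+} + \mathrm{Res}_{z_-} \bigr),
\end{equation*}
where $I_{\sigma - 2}$ denotes the same integrand along $\Im z = \sigma - 2$.

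On this shifted line one has $\Re(\sigma \pm it + iz) = 2$, so the series \eqref{DirichletS} converges absolutely; substituting both in, swapping sum and integral (legitimate since $|K(z)| \ll 1/(|\Re z|^2 + 1)$ uniformly in the strip), and then shifting the inner integral $\int K(z) e^{-iz \log(nm)}\,dz$ back up to the real axis by a second application of Cauchy (no poles to cross) recognizes it as $\widehat{K}(\log nm)$. Pulling out the factors $n^{-(\sigma+it)} m^{-(\sigma - it)}$ then recovers exactly the double sum on the right of \eqref{ConvFom}.

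To finish, I would compute the two residues. Near $z_+$, writing $z = z_+ + u$ gives $\F_\ell(\sigma - it + iz) = \ell!/(iu)^{\ell+1} + G(u)$ with $G$ holomorphic at $u = 0$, while $\F_\ell(\sigma + it + iz) = \F_\ell(1 + 2it + iu)$ and $K(z_+ + u)$ are already holomorphic there. Extracting the $u^{-1}$ coefficient of the product by Leibniz and translating the $u$-derivatives back into $s$-derivatives of $\F_\ell$ and into derivatives in $w = i\sigma - iz$ for $K$ collapses the sum to exactly $-i\,\ell!\,\Delta^+$, so that $-2\pi i \cdot \mathrm{Res}_{z_+} = -2\pi\,\ell!\,\Delta^+$; the symmetric computation at $z_-$ (replace $t$ by $-t$) produces $-2\pi\,\ell!\,\Delta^-$, giving the claimed identity. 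The main obstacle is entirely bookkeeping: tracking the signs and powers of $i$ spawned by the chain rule $d/dz\,\F_\ell(\sigma \pm it + iz) = i\,\F_\ell'(\sigma \pm it + iz)$ and by the $(iu)^{\ell+1}$ in the denominator, and checking that they match the combinations prescribed in \eqref{ConvResd} and \eqref{ConvResd2}. The analytic steps---contour shifts, decay at infinity, and the interchange of sum and integral---are routine given hypothesis \eqref{Kernel} and Lemma~\ref{Estimate: ZetaDeriva}.
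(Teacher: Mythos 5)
Your proof is correct and is essentially the same contour-shift argument as the paper's: you push the $y$-contour down to $\Im y = \sigma - 2$, which under $z = \sigma + iy$ is exactly the paper's rectangle with right edge on $\Re z = 2$, collect the same two residues of order $\ell+1$, and on the shifted line recognize the double Dirichlet series with $\widehat K(\log nm)$. Your bookkeeping of the powers of $i$ (leading to $-2\pi i\,\mathrm{Res}_{z_{\pm}} = -2\pi\,\ell!\,\Delta^{\pm}$) reproduces the paper's residue formulas, so the two proofs differ only in the choice of working variable.
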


\begin{proof}
 Define $h(z):\, = \F_{\ell}(z+it) \F_{\ell}(z-it) K(i \sigma - iz) .$ $h(z)$ is a holomorphic function with  two poles, namely at $z = 1+it$ and $z = 1 - it$. Let $Y$ be large and consider straight line integrals for $h(z)$. Set $J_1 = \int_{\sigma - i Y}^{2 -i Y}h(z)dz,\, J_2 =\int_{2 - i Y}^{2 + i Y}h(z)dz,\,  J_3 =\int_{2 + i Y}^{\sigma + i Y}h(z)dz, \, J_4 =\int_{\sigma + i Y}^{\sigma - i Y}h(z)dz .$

Note that $\F_{\ell}(s) =  \ell !/(s-1)^{\ell+1} + E(s)$, where $E(s)$ is an entire function. The residue theorem gives that 
\begin{equation}
  J_1 + J_2 + J_3 + J_4  = 2 \pi i ( \Delta^{+} + \Delta^{-}) \ell!
\end{equation}
  
  By ($\ref{DirichletS}$), ($\ref{Kernel}$)  and  applying Cauchy’s theorem term by term, \[\lim_{Y \to \infty} J_2 = i  \sum_{ n = 1}^{\infty}\sum_{m = 1}^{\infty}a_{\ell}(n)a_{\ell}(m) \frac{\widehat{K}(\log nm)}{n^{\sigma+it} \cdot m^{\sigma-it}}. \]
  
  Clearly,
\[
  \lim_{Y \to \infty} (-J_4) = i \int_{-\infty}^{\infty} \F_{\ell}(\sigma+it+iy) \F_{\ell}(\sigma-it+iy) K(y) dy.
  \]
  
By the trivial estimate $F_{\ell}(s) \ll 1 + |\zeta^{(\ell)}(s)|$, estimates for $\zeta^{(\ell)}(s)$ (Lemma \ref{Estimate: ZetaDeriva}) and (\ref{Kernel}), we obtain \[
    J_1 \ll_{\epsilon} \frac{1}{Y^2} \Big(1+\int_{\sigma}^{1+\epsilon}(1+ Y^{1-x+3\epsilon}) dx    \Big)
    \ll_{\epsilon} \frac{1}{Y^2}\Big( 1 + \frac{Y^{1+3\epsilon - \sigma} - Y^{2 \epsilon}}{\log Y} \Big).\]
 
 Take $\epsilon = 1/6$, then $J_1 \ll 1/( \sqrt Y \log Y) \to 0$, as $Y \to \infty$. Similarly for $J_3.$

\end{proof}


The following results are due to  Hadamard, Landau and Schnee (also see \cite{Ingham}).

\begin{lemma}[Hadamard, Landau, Schnee]\label{moment1}
 Let $\mu, \nu \in \mathbb{N}$ and $\alpha_1, \alpha_2 \in (- \frac{1}{2}, \infty)$ be fixed. Suppose $\alpha_1 + \alpha_2 > 1$, then  
\begin{equation*}
    \int_1^T \zeta^{(\mu)}(\alpha_1\!+\! it)\,\zeta^{(\nu)}(\alpha_2\!-\! it) dt \sim \zeta^{(\mu + \nu)}(\alpha_1 + \alpha_2) T.
\end{equation*}
\end{lemma}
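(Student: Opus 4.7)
The plan is to deduce the lemma from the classical $\mu=\nu=0$ case of the Hadamard--Landau--Schnee asymptotic
\[
\int_1^T \zeta(\beta_1+it)\zeta(\beta_2-it)\,dt = \zeta(\beta_1+\beta_2)\,T + o(T) \qquad (\beta_1+\beta_2>1),
\]
by pulling the derivatives outside the $t$-integral via Cauchy's formula. First I would fix small radii $r_1, r_2 > 0$ such that, for every $z_j$ on the circle $C_j:=\{|z-\alpha_j|=r_j\}$, one still has $\Re z_j > -1/2$ for $j=1,2$ and $\Re z_1 + \Re z_2 > 1$. Applying Cauchy's integral formula to the holomorphic maps $w\mapsto \zeta(w+it)$ and $w\mapsto\zeta(w-it)$ gives
\[
\zeta^{(\mu)}(\alpha_1+it) = \frac{\mu!}{2\pi i}\oint_{C_1}\frac{\zeta(z_1+it)\,dz_1}{(z_1-\alpha_1)^{\mu+1}}, \qquad \zeta^{(\nu)}(\alpha_2-it) = \frac{\nu!}{2\pi i}\oint_{C_2}\frac{\zeta(z_2-it)\,dz_2}{(z_2-\alpha_2)^{\nu+1}}.
\]

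Next I would multiply these representations, integrate in $t\in[1,T]$, and exchange the order of integration (legitimate since $\zeta$ is uniformly bounded on $C_j \pm i[1,T]$), obtaining
\[
\int_1^T \zeta^{(\mu)}(\alpha_1+it)\zeta^{(\nu)}(\alpha_2-it)\,dt = \frac{\mu!\,\nu!}{(2\pi i)^2}\oint_{C_1}\!\oint_{C_2}\frac{I(z_1,z_2;T)\,dz_2\,dz_1}{(z_1-\alpha_1)^{\mu+1}(z_2-\alpha_2)^{\nu+1}},
\]
where $I(z_1,z_2;T):=\int_1^T\zeta(z_1+it)\zeta(z_2-it)\,dt$. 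Invoking the base case \emph{uniformly} on the compact set $C_1\times C_2$ yields $I(z_1,z_2;T)=\zeta(z_1+z_2)T+o(T)$. Substituting and applying Cauchy's formula in reverse to the holomorphic function $(z_1,z_2)\mapsto\zeta(z_1+z_2)$ collapses the double contour integral into
\[
T\cdot \partial_{z_1}^{\mu}\partial_{z_2}^{\nu}\zeta(z_1+z_2)\Big|_{z_1=\alpha_1,\,z_2=\alpha_2} + o(T) = T\cdot\zeta^{(\mu+\nu)}(\alpha_1+\alpha_2) + o(T),
\]
where the last equality uses the chain rule.

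The main obstacle is justifying that the $\mu=\nu=0$ asymptotic holds \emph{uniformly} on the compact bidisk $C_1\times C_2$. To establish this I would revisit the classical proof: apply the Hardy--Littlewood approximate functional equation to each $\zeta$-factor, expand the product as a double Dirichlet sum whose diagonal already produces $\zeta(z_1+z_2)T$, and bound the off-diagonal terms using the standard estimate $\int_1^T (n/m)^{it}\,dt = O(1/|\log(n/m)|)$. Because all error exponents vary continuously in $(z_1,z_2)$ and the hypotheses $\Re z_j > -1/2$ and $\Re z_1+\Re z_2 > 1$ are preserved throughout $C_1\times C_2$, the required uniformity follows from routine bookkeeping.
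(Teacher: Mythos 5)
The paper offers no proof of this lemma; it simply attributes the result to Hadamard--Landau--Schnee and points to Ingham, whose Theorem A${}'$/A${}''$ already treats the derivative case directly (with explicit error terms). So you are supplying an argument where the paper defers to a reference, and your route is genuinely different: you deduce the derivative mean-value from the $\mu=\nu=0$ case by encircling $\alpha_1,\alpha_2$ with small contours $C_1,C_2$, interchanging integrals, invoking a locally uniform version of the base asymptotic on $C_1\times C_2$, and then undoing Cauchy's formula to land on $\zeta^{(\mu+\nu)}(\alpha_1+\alpha_2)$. The algebra of the reverse Cauchy step is correct, and the choice of radii keeping $\Re z_1+\Re z_2$ bounded away from $1$ (so $\zeta(z_1+z_2)$ stays bounded on the bidisk) is the right precaution.

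Two things deserve repair. First, the claim that ``$\zeta$ is uniformly bounded on $C_j\pm i[1,T]$'' is false as stated: $\zeta$ is not uniformly bounded as $T\to\infty$ inside the critical strip. What you actually need for Fubini is only joint continuity of the integrand on the compact set $C_1\times C_2\times[1,T]$ for each fixed $T$, which holds trivially; say that instead. Second, and more substantively, the heart of the argument is the \emph{locally uniform} $o(T)$ in $I(z_1,z_2;T)=\zeta(z_1+z_2)T+o(T)$ over $C_1\times C_2$, which you flag as the main obstacle but dispose of with ``routine bookkeeping.'' This is not automatic from a bare statement of the $\mu=\nu=0$ result with an unquantified $o(T)$; one must either track explicit error terms (as Ingham does --- his Theorem A gives a power-saving error with exponents depending continuously on the real parts, which yields the needed uniformity on compacta in the region $\Re z_j>-\tfrac12$, $\Re z_1+\Re z_2>1$) or reconstruct them via the approximate functional equation as you sketch. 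Note that if one is willing to re-run that Dirichlet-series argument in full, inserting $(\log n)^{\mu}(\log m)^{\nu}$ into the diagonal directly (Ingham's own route) is no harder than the base case, so the Cauchy-contour detour is elegant mainly when one is content to \emph{cite} a uniform version of the base asymptotic rather than reprove it. With that citation in hand, your derivation is sound.
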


In particular, when $\ell \in \mathbb{N}$ and $\sigma \in (\frac{1}{2}, 1)$ are fixed, one has
\begin{equation}
  \int_0^T \big|\zeta^{(\ell)}(\sigma\!+\!it)\big| ^2 dt \sim \zeta^{(2\ell)}(2\sigma) T. \quad 
\end{equation}






For $\sigma = \frac{1}{2}$,   Ingham \cite[ page 294, Theorem \textbf{A"} ]{Ingham} has proved the following result on second moments  of $\zeta^{(\ell)}(s)$. 
\begin{lemma}[Ingham]\label{moment2}
 Let $\ell \in \mathbb{N}$  be fixed. Then 
\begin{equation*}
    \int_0^T \big|\zeta^{(\ell)}(\frac{1}{2}\!+\! it)\big| ^2 dt \sim  \frac{T}{2\ell+1} (\log \frac{T}{2\pi})^{2\ell+1}.
\end{equation*}
\end{lemma}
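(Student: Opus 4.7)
The plan is to deduce this classical second moment by combining Lemma \ref{approx} with the Montgomery--Vaughan mean value theorem for Dirichlet polynomials, followed by a dyadic decomposition of $[0,T]$.

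First, I would fix $T_0 \leq T$ and apply Lemma \ref{approx} with $\sigma_0 = 1/4$, $\varepsilon = 1/4$, $\sigma = 1/2$ to write, uniformly for $t \in [T_0, 2T_0]$,
\[
(-1)^\ell \zeta^{(\ell)}\Big(\tfrac{1}{2}+it\Big) = P(t) + O_\ell\big(T_0^{-1/4}\big), \qquad P(t) := \sum_{n \leq T_0} \frac{(\log n)^\ell}{n^{1/2+it}}.
\]
A Cauchy--Schwarz argument combined with the easy upper bound $\int_{T_0}^{2T_0}|P|^2\,dt \ll T_0 (\log T_0)^{2\ell+1}$ coming from Montgomery--Vaughan shows that both $\int|P\cdot R|\,dt$ and $\int|R|^2\,dt$ are $O_\ell(T_0^{3/4}(\log T_0)^{\ell+1/2})$, which is $o(T_0(\log T_0)^{2\ell+1})$. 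Hence it suffices to compute $\int_{T_0}^{2T_0}|P(t)|^2\,dt$ asymptotically.

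Next, by the Montgomery--Vaughan mean value theorem applied to $P$ with coefficients $a_n = (\log n)^\ell/\sqrt{n}$,
\[
\int_{T_0}^{2T_0}|P(t)|^2\,dt = T_0 \sum_{n \leq T_0}\frac{(\log n)^{2\ell}}{n} + O\Big(\sum_{n \leq T_0}(\log n)^{2\ell}\Big).
\]
Partial summation gives $\sum_{n \leq T_0}(\log n)^{2\ell}/n = (\log T_0)^{2\ell+1}/(2\ell+1) + O((\log T_0)^{2\ell})$, while the error term is $O(T_0 (\log T_0)^{2\ell})$. Therefore
\[
\int_{T_0}^{2T_0}\Big|\zeta^{(\ell)}\big(\tfrac{1}{2}+it\big)\Big|^2\,dt = \frac{T_0 (\log T_0)^{2\ell+1}}{2\ell+1} + O_\ell\big(T_0 (\log T_0)^{2\ell}\big).
\]

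Finally, I would decompose $[0,T]$ into dyadic pieces $[T/2^{k+1}, T/2^k]$ for $0 \leq k \leq \lfloor \log_2 T \rfloor$ (plus a negligible short interval near the origin) and sum the previous estimate. The resulting series $\sum_{k\geq 0} (T/2^{k+1})(\log(T/2^k))^{2\ell+1}/(2\ell+1)$ is dominated by its first few terms and produces the main term $T(\log T)^{2\ell+1}/(2\ell+1)$ modulo an error $O_\ell(T(\log T)^{2\ell})$. Since $(\log(T/(2\pi)))^{2\ell+1} \sim (\log T)^{2\ell+1}$, the stated asymptotic follows. The only (minor) obstacle is controlling the cumulative dyadic error so it remains $o(T(\log T)^{2\ell+1})$, which is routine because the geometric decay $2^{-k}$ dominates any logarithmic growth in $k$.
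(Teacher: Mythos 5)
Your proof is correct, but it follows a genuinely different route from the paper: the paper does not prove this lemma at all, it simply cites Ingham's 1926 Theorem \textbf{A}\textquotedbl{} from \cite{Ingham}. Ingham's original argument works via a careful treatment of the approximate functional equation and gives the full asymptotic with secondary terms, whereas your proof relies on the Montgomery--Vaughan mean value theorem (1974), which of course was unavailable to Ingham. Your route is shorter and more modern, at the cost of only recovering the leading order; for this lemma, which is stated only as an asymptotic, that is exactly enough. The ingredients all check out: applying Lemma \ref{approx} with $\sigma_0 = 1/4$, $\varepsilon = 1/4$ does give an error $O_\ell(T_0^{-1/4})$ on $[T_0, 2T_0]$; Montgomery--Vaughan on the Dirichlet polynomial $P(t)$ with coefficients $a_n = (\log n)^\ell/\sqrt{n}$ gives $T_0\sum_{n\leq T_0}(\log n)^{2\ell}/n + O(\sum_{n\leq T_0}(\log n)^{2\ell})$; partial summation and the binomial expansion of $(\log T - k\log 2)^{2\ell+1}$ show that the dyadic sum reconstitutes $T(\log T)^{2\ell+1}/(2\ell+1)$ with an $O_\ell(T(\log T)^{2\ell})$ loss; and the short interval near the origin contributes $O(1)$. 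The cross-term bound $O_\ell(T_0^{3/4}(\log T_0)^{\ell+1/2})$ via Cauchy--Schwarz is also fine. So this is a valid, self-contained alternative to citing Ingham, trading precision in the error term for avoiding the classical mean-value machinery.
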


\medskip

\section{Proof of  Proposition \ref{maxRatio}}

\begin{proof}
We will use the construction of Bondarenko and  Seip in \cite{BSNote}. 

Let $\delta = \ell\cdot(\ell+1)^{-1}$. Given a positive number $y$ and a positive integer $b$, define 
\[\PP(y, b):\, = \prod_{p \leqslant y} p^{b-1}\,\;.\]

We will choose a number $x$ and an integer $b$ later to make $\PP(x, b) \leqslant \sqrt T$. Let $\M$ be the set of divisors of $\PP(x, b)$ and $\M_{\delta}$ be the set of divisors of $\PP(x^{\delta}, b)$. Let $\,\,\overline{\M_{\delta}}\,\,$ be the  complement of $\M_{\delta}$ in $\M$. Note that both $\M$ and $\M_{\delta}$ are divisor-closed which means $k|n, n\in \M \implies k \in \M $ and $k|n, n\in \M_{\delta} \implies k \in \M_{\delta} $. Define the function $r:\,\N \to \{0,\,1\}$ to be the characteristic function of $\M$, then 
\begin{align*}
    \Big|\sum_{mk = n\leqslant \sqrt T} \frac{r(m)\overline{r(n)}}{ k} (\log k)^{\ell}
\Big| \Big/ \Big(\sum_{n\leqslant \sqrt T }|r(n)|^2\Big) = \frac{1}{|\M|}\sum_{mk = n \in \M}\frac{(\log k)^{\ell}}{k} = \frac{1}{|\M|}\sum_{\substack{ n \in \M\\ k|n}}\frac{(\log k)^{\ell}}{k}\,.
\end{align*}

As showed in \cite{BSNote}, 
\begin{align*}
    \frac{1}{|\M|}\sum_{\substack{ n \in \M\\ k|n}}\frac{1}{k} =  \prod_{p \leqslant x}\Big(1+ \sum_{\nu = 1}^{b-1}\Big(1-\frac{\nu}{b}\Big)p^{-\nu}\Big)\,.
\end{align*}

Also in \cite{BSNote}, it is proved that
\begin{align}\label{Product}
    \prod_{p \leqslant x}\Big(1+ \sum_{\nu = 1}^{b-1}\Big(1-\frac{\nu}{b}\Big)p^{-\nu}\Big) =\Big( 1 + O(b^{-1}) + O\big( \frac{1}{\sqrt x \log x} \big) \Big)\,e^{\gamma} \log x\,.
\end{align}

Next, we split the sum into the following two parts:
\begin{align*}
    \frac{1}{|\M|}\sum_{\substack{ n \in \M\\ k|n}}\frac{1}{k} =  \frac{1}{|\M|}\sum_{\substack{ n \in \M\\ k|n\\ k \in \M_{\delta}}}\frac{1}{k} + \frac{1}{|\M|}\sum_{\substack{ n \in \M\\ k|n\\ k \in \overline{\M_{\delta}}}}\frac{1}{k}\,.
\end{align*}

We will prove the following identity:
\begin{align}\label{M_delta}
     \frac{1}{|\M|}\sum_{\substack{ n \in \M\\ k|n\\ k \in \M_{\delta}}}\frac{1}{k} = \prod_{p \leqslant x^{\delta}}\Big(1+ \sum_{\nu = 1}^{b-1}\Big(1-\frac{\nu}{b}\Big)p^{-\nu}\Big)\,.
\end{align}

To see this, let $m$ be the largest integer such that $p_m \leqslant x^{\delta}$ and let $w$ be the largest integer such that $p_w \leqslant x $ \;($p_n$ denotes the $n$-th prime). Then we have
\begin{align*}
    \sum_{\substack{ n \in \M\\ k|n\\ k \in \M_{\delta}}}\frac{1}{k} &= \sum_{k \in \M_{\delta} }\frac{1}{k} \Big( \sum_{\substack{k|n\\ n\in \M}} 1\Big) \\&= \sum_{\alpha_1 = 0}^{b-1}\sum_{\alpha_2 = 0}^{b-1}\cdots\sum_{\alpha_m = 0}^{b-1} \frac{1}{p_1^{\alpha_1} p_2^{\alpha_2} \cdots p_m^{\alpha_m}   } (b - \alpha_1)(b - \alpha_2)\cdots (b - \alpha_m)\cdot b^{w-m} \\&= b^{w-m} \cdot \prod_{n = 1}^{ m} \Big( \sum_{\alpha_n = 0}^{ b -1} \frac{b - \alpha_n}{p_n^{\alpha_n}} \Big) \\&= b^{w} \prod_{n=1}^{m}\Big(\sum_{\nu = 0}^{b-1} \Big(1-\frac{\nu}{b}\Big)p_n^{-\nu} \Big)\,.
\end{align*}

Note that $|\M| = b^{w}$, then we immediately get  $\eqref{M_delta}$.  Now $\eqref{Product}$ together with $\eqref{M_delta}$ give that
\begin{align}
     \frac{1}{|\M|}\sum_{\substack{ n \in \M\\ k|n\\ k \in \M_{\delta}}}\frac{1}{k} = \prod_{p \leqslant x^{\delta}}\Big(1+ \sum_{\nu = 1}^{b-1}\Big(1-\frac{\nu}{b}\Big)p^{-\nu}\Big)  =\Big( 1 + O(b^{-1}) + O\big( \frac{1}{ \sqrt{x^{\delta}}  \log x} \big) \Big)\,e^{\gamma}\cdot \delta \cdot \log x\,,
\end{align}
where we omit the term $\delta^{-1}$ inside the second big $O(\cdot)$ term since $1<\delta^{-1} \leqslant 2$. Thus we obtain
\begin{align*}
 \frac{1}{|\M|}\sum_{\substack{ n \in \M\\ k|n\\ k \in \overline{\M_{\delta}}}}\frac{1}{k}   = \frac{1}{|\M|}\sum_{\substack{ n \in \M\\ k|n}}\frac{1}{k} - \frac{1}{|\M|}\sum_{\substack{ n \in \M\\ k|n\\ k \in \M_{\delta}}}\frac{1}{k} = \Big( 1 + O(b^{-1}) + O\big( \frac{1}{ \sqrt{x^{\delta}}  \log x} \big) \Big)\,e^{\gamma} (1-\delta)\, \log x\,.
\end{align*}

By the definition of $\,\overline{\M_{\delta}}\,$, if $k\in \,\,\overline{\M_{\delta}}\,$, then $\log k \geqslant \delta\,\log x$ . So we have
\begin{align*}
    \frac{1}{|\M|}\sum_{\substack{ n \in \M\\ k|n}}\frac{(\log k)^{\ell}}{k} \geqslant \frac{1}{|\M|}\sum_{\substack{ n \in \M\\ k|n\\ k \in \overline{\M_{\delta}}}}\frac{(\log k)^{\ell}}{k} \geqslant \Big( 1 + O(b^{-1}) + O\big( \frac{1}{ \sqrt{x^{\delta}}  \log x} \big) \Big)\,e^{\gamma} (1-\delta)\delta^{\ell}\, (\log x)^{\ell+1}\,.
\end{align*}
Now we set $x = (\log T) / (3 \log_2 T)$ and $b = [\log_2 T]$. By the prime number theorem, $\PP(x, b) \leqslant \sqrt T$  when $T$ is sufficiently large. Take the choices of $x, b$ and $\delta = \ell\cdot(\ell+1)^{-1}$ into the above inequality, then we are done.
\end{proof}

\section{Proof of theorem \ref{Main: sigma =  1}}
\begin{proof}
Set $N = [T^{\frac{1}{2}}]$ and let $R(t):\, = \sum_{n\leqslant N}r(n)n^{-it}$. Define the moments as follows:
\begin{align*}
 M_1(R,T):\, &= \int_{T}^{2T} |R(t)|^2 \Phi(\frac{t}{T} ) dt,\\   M_2(R,T):\, &= \int_{T}^{2T} (-1)^{\ell} \zeta^{(\ell)}(1 + it) |R(t)|^2 \Phi(\frac{t}{T} ) dt\,.
\end{align*}

As in \cite{So},  $\Phi:\, \mathbb R \to \mathbb R$ denotes a smooth function, compactly supported in $[1,2]$, 
with $0 \leqslant \Phi(y) \leqslant  1$ for all $y$, and $\Phi(y)=1$ for $5/4\leqslant y\leqslant 7/4$.  Partial integration gives that
${\hat \Phi}(y) \ll_{\nu} |y|^{-\nu}$ for any positive integer $\nu$.

Also in \cite{So},  Soundararajan  proved that
\begin{equation}\label{firstMoment}
    M_1(R,T) =T {\hat \Phi}(0) (1+O(T^{-1})) \sum_{n\leqslant N} |r(n)|^2\, .
\end{equation}

Since $\Phi$ is compactly supported in $[1,2]$, we deduce that
\begin{align*}
    \int_{T}^{2T}|R(t)|^2\sum_{k \leqslant T}\frac{(\log k)^{\ell}}{k^{1+it}}\Phi(\frac{t}{T})dt = T\sum_{m,\, n \leqslant N}\sum_{k \leqslant T} \frac{r(m)\overline{r(n)}}{k}(\log k)^{\ell}\cdot {\hat \Phi}\Big(T\cdot \log \frac{km}{n}\Big)\,.
\end{align*}

Since $N\leqslant T^{\frac{1}{2}}$,  for the off-diagonal terms $km\neq n$ we 
have ${\hat \Phi}(T\log (km/n))\ll T^{-2}$, by the rapid decay of ${\hat \Phi}$ ~(see \cite[page 471]{So}).  
 Thus the contribution of the off-diagonal terms $km \neq n$ to the above summands can be bounded by
\begin{align*}
    \ll T  \Big(\sum_{n\leqslant N}|r(n)|\Big)^2 \cdot  \sum_{k \leqslant T}\frac{(\log k)^{\ell}}{k}\cdot T^{-2} \ll T^{-1} (\log T)^{\ell+1} N \sum_{n\leqslant N}|r(n)|^2\,.
\end{align*}

Again, by $N = [T^{\frac{1}{2}}]$, we obtain
\begin{align}\label{DiriInt}
   \int_{T}^{2T}|R(t)|^2\sum_{k \leqslant T}\frac{(\log k)^{\ell}}{k^{1+it}}\Phi(\frac{t}{T})dt = &\, {\hat \Phi}(0)T\sum_{mk = n\leqslant \sqrt T} \frac{r(m)\overline{r(n)}}{ k} (\log k)^{\ell}\\
 \nonumber &+ \,O\Big( T^{-\frac{1}{2}}   (\log T)^{\ell+1}  \sum_{n\leqslant \sqrt T}|r(n)|^2\Big)\,.
\end{align}

By Lemma \ref{approx}, we have the following approximation formula and the implied constant in the big $O(\cdot)$ term is absolute:
\begin{equation*}
   (-1)^{\ell} \zeta^{(\ell)}(1+ it) = \sum_{k \leqslant T} \frac{(\log k)^{\ell}}{k^{1+it}} + O\Big(  \frac{\ell !}{\epsilon^{\ell}}\cdot T^{-1+\epsilon}\Big)\,,\quad T \leqslant t \leqslant 2T.
\end{equation*}

In the integral of $M_2(R, T)$, the big $O(\cdot)$ term above contributes  at most
\begin{align*}
    \ll \int_{T}^{2T}\frac{\ell !}{\epsilon^{\ell}}\cdot T^{-1+\epsilon} \cdot \big|R(t)\big|^2 \Phi(\frac{t}{T} ) dt \ll \frac{\ell !}{\epsilon^{\ell}}\cdot T^{-1+\epsilon} \cdot  M_1(R, T) \, .
\end{align*}

Combining this with  (\ref{DiriInt}), we have
\begin{align*}
    M_2(R, T) =  & {\hat \Phi}(0)T\sum_{mk = n\leqslant \sqrt T} \frac{r(m)\overline{r(n)}}{ k} (\log k)^{\ell}  + \,O\Big( T^{-\frac{1}{2}}   (\log T)^{\ell+1}  \sum_{n\leqslant \sqrt T}|r(n)|^2\Big)\\&+   O\Big(  \frac{\ell !}{\epsilon^{\ell}}\cdot T^{-1+\epsilon}\Big) \cdot  M_1(R, T)\,.
\end{align*}

 Finally, the above formula together with $\eqref{firstMoment}$ give that
\begin{align*}
\max_{T\leqslant t\leqslant 2T}\left|\zeta^{(\ell)}\Big(1+it\Big)\right| & \geqslant \frac{|M_2(R, T)|}{M_1(R, T)} \\ \geqslant &\big(1 + O(T^{-1})\big)
  \Big|\sum_{mk = n\leqslant \sqrt T} \frac{r(m)\overline{r(n)}}{ k} (\log k)^{\ell}
\Big| \Big/ \Big(\sum_{n\leqslant \sqrt T }|r(n)|^2\Big) \\
&+\,O\Big( T^{-\frac{3}{2}}\, (\log T)^{\ell +1} \Big)  + O\Big(  \frac{\ell !}{\epsilon^{\ell}}\cdot T^{-1+\epsilon}\Big) \,. \end{align*}
    
Now let $\epsilon = (\log_2 T)^{-1}$. By  Stirling's formula,  if $T$ is sufficiently large, then for all positive integers $\ell \leqslant (\log T) (\log_2 T)^{-1}$, we have \,$ \ell ! \cdot \epsilon^{-\ell} \cdot T^{-1+\epsilon} \leqslant 3 (\log_2 T)^{\ell}$. Other big $O(\cdot)$ terms  can be easily bounded. Together with   Proposition \ref{maxRatio}, we finish the proof of Theorem \ref{Main: sigma =  1}.

\end{proof}

\section{Proof of  theorem \ref{Main}}

\subsection{Constructing the resonator}\label{sec:res}
 
Given a set $\mathcal{M}$  of positive integers and a parameter $T$, we will construct a  resonator $R(t)$, following ideas from \cite{A}, \cite{BS1} and \cite{delaBT}.
Define \[\mathcal{M}_j:\,= \Big[(1+\frac{\log T}{T})^j,(1+\frac{\log T}{T})^{j+1}\Big)\bigcap \mathcal{M} \quad (j\geqslant 0).\]
Let $\mathcal{J}$ be the set of integers $j$ such that $  \mathcal{M}_j \neq \emptyset$
and let $m_j$ be the minimum of $\mathcal{M}_j$  for $j\in \mathcal{J}$. We then set
\[ \mathcal{M}':\, = \big \{ m_j: \ j\in \mathcal{J} \big\}\]
and \[ r(m_j):\,=\sqrt{ \sum_{m\in\mathcal{M}_j} 1} = \sqrt{|\mathcal{M}_j| } \] 
for every $m_j$ in $\mathcal{M}'$. Then the  resonator $R(t)$ is defined as follows: 
\begin{equation}
   R(t):\,=\sum_{m\in \mathcal{M}'}\frac{r(m)}{m^{it}} \,.
\end{equation}

By Cauchy's inequality, one has the following trivial estimates \cite{delaBT}:
$$R(0)^2\leqslant   N \sum_{m\in \mathcal{M}'} r(m)^2\leqslant N  |\mathcal{M}| \leqslant N^2\,.$$
  \par
 
As in \cite{BS1} , set  $\Phi(t):\,= e^{-t^2/2}$. Its Fourier transform satisfies $\widehat \Phi(\xi)=\sqrt{2\pi} \Phi(\xi).$

Replacing $T$ by $  T/\log T$ in Lemma 5 of \cite{BS2}, gives that
 \begin{equation}\label{Rt2phidt}
   \int_{-\infty}^{\infty} |R(t)|^2\Phi\Big(\frac{t\log T}{T}\Big)dt \ll \frac{T|\mathcal{M}|}{\log T}  \,.
 \end{equation}

\subsection{The proof}

\begin{proof}

 Let $\sigma \in [\frac{1}{2}, 1)$. Choose $\kappa \in (0,  1-\beta)$ and  set $N:\,=[T^{\kappa}].$ Fix $\varepsilon>0$ such that $ \kappa  + 4\varepsilon <1 $.

As in \cite{BS2}, choose
\[ K(t):\,=\frac{\sin^2((\varepsilon \log T)t)}{(\varepsilon \log T)t^2},\]
which has Fourier transform 
\begin{equation}\label{Sin:fourier} \widehat{K}(\xi)=\pi \max\left(\left(1-\frac{|\xi |}{2 \varepsilon \log T}\right), 0\right). \end{equation} 

Define 
\begin{align*}
   \mathscr{Z}_{\sigma}(t, y):\,&= \F_{\ell}(\sigma+it+iy) \F_{\ell}(\sigma-it+iy) K(y)\,,\\
   I(T)  :\;&= \int_{|t| \geqslant \A}|R(t)|^2\Phi\Big(\frac{t\log T }{T}  \Big)\int_{-\infty}^{\infty}\mathscr{Z}_{\sigma}(t, y) dydt\,.
\end{align*}

Following \cite{BS2} and \cite{delaBT}, we will show that the integral on $2 T^{\beta} \leqslant |t|\leqslant \frac{T}{2} $ and $|y|\leqslant\frac{|t|}{2}$  gives the main term for $I(T)$. We will frequently use the following  trivial estimates ( Lemma \ref{Estimate: ZetaDeriva} ) :
\begin{equation}\label{Zeta3/5}
    |\F_{\ell}(\sigma \pm it+iy)|\ll 1+ |\zeta^{(\ell)}(\sigma \pm it+iy)|\ll (1 + |t|+|y|)^{\frac{3}{10}}.
\end{equation}

A simple computation gives
\[ \int_{\A \leqslant |t|\leqslant 2 T^{\beta}} \int_{|y|>  T^{\beta}}\mathscr{Z}_{\sigma}(t, y) dydt \ll \int_{|t|\leqslant 2 T^{\beta}} \int_{|y|>  T^{\beta}}(1 + |t|+|y|)^{\frac{3}{5}}\frac{1}{(|t|+|y|)^2}dydt \ll (T^{\beta})^{\frac{3}{5}}\,.
\]

Note that
\begin{align*}
&\F_{\ell}(\sigma+it+iy) \F_{\ell}(\sigma-it+iy)\\\ll &\Big(1+ |\zeta^{(\ell)}(\sigma + it+iy)|\Big)\Big(1+ |\zeta^{(\ell)}(\sigma - it+iy)|\Big)\\
\ll&1+ |\zeta^{(\ell)}(\sigma + it+iy)|+ |\zeta^{(\ell)}(\sigma - it+iy)| + |\zeta^{(\ell)}(\sigma + it+iy)|^2+ |\zeta^{(\ell)}(\sigma - it+iy)|^2  \,.
\end{align*}

Thus

\begin{align*}
\int_{\A \leqslant |t|\leqslant 2 T^{\beta}} \int_{-\infty}^{\infty}\mathscr{Z}_{\sigma}(t, y) dydt &\ll T^{\beta} + \int_{\A \leqslant|t|\leqslant 2 T^{\beta}} \int_{|y|\leqslant  T^{\beta}}\mathscr{Z}_{\sigma}(t, y) dydt\\
&\ll    T^{\beta} +  \int_{-3T^{\beta}}^{3T^\beta} |\zeta^{(\ell)}(\sigma + i t)| dt  + \int_{-3T^{\beta}}^{3T^\beta} |\zeta^{(\ell)}(\sigma + i t)|^2 dt \\
&\ll T^\beta \cdot (\log T)^{2\ell +1},
\end{align*} 

where the last step follows from Lemma \ref{moment1}, \ref{moment2} and the Cauchy–Schwarz inequality. Trivially, by $|R(t)| \leqslant R(0)$ and $\Phi(\cdot)\leqslant 1$,
\[\int_{\A \leqslant |t|\leqslant 2T^{\beta}}|R(t)|^2\Phi\Big(\frac{t\log T }{T}  \Big)\int_{-\infty}^{\infty}\mathscr{Z}_{\sigma}(t, y) dydt \ll R(0)^2 T^\beta \cdot (\log T)^{2\ell +1}  \ll |\mathscr{M}| T^{\beta + \kappa} (\log T)^{2\ell +1}.\]

 The fast decay of $\Phi$ and $(\ref{Zeta3/5})$  give that
\[\int_{|t|> \frac{T}{2}}|R(t)|^2\Phi\Big(\frac{t\log T }{T}  \Big)\int_{-\infty}^{\infty}\mathscr{Z}_{\sigma}(t, y) dydt \ll T^{\kappa +4} e^{-\frac{1}{16}(\log T)^2} \cdot  |\mathscr{M}| \ll o(1) \cdot  |\mathscr{M}| \,.
\]

Using $(\ref{Rt2phidt})$ and $(\ref{Zeta3/5})$, one can compute
\[\int_{2 T^{\beta} \leqslant |t|\leqslant \frac{T}{2}}|R(t)|^2\Phi\Big(\frac{t\log T }{T}  \Big)\int_{|y|>\frac{|t|}{2}}\mathscr{Z}_{\sigma}(t, y) dydt \ll \frac{T^{1-\frac{2}{5}\beta}}{(\log T)^2} \cdot |\mathscr{M}| \,.
\]

Combining the above estimates, one gets
\begin{align*}
I(T) = \int_{2 T^{\beta} \leqslant |t|\leqslant \frac{T}{2}}|R(t)|^2\Phi\Big(\frac{t\log T }{T}  \Big)\int_{|y|\leqslant\frac{|t|}{2}}\mathscr{Z}_{\sigma}(t, y) dydt &+ |\mathscr{M}|\cdot O\Big( T^{\beta + \kappa}  (\log T)^{2\ell +1} \Big)\\
&+  |\mathscr{M}| \cdot O\Big(\frac{T^{1-\frac{2}{5}\beta}}{(\log T)^2}   \Big)    \,.
\end{align*}

Note that $2 T^{\beta} \leqslant |t|\leqslant \frac{T}{2} $ and $|y|\leqslant\frac{|t|}{2}$  give $T^{\beta} \leqslant |t\pm y| \leqslant T.$\, Again, by $(\ref{Rt2phidt})$
\begin{align}\label{UpperBoundIT}
    I(T) \ll \frac{T|\mathcal{M}|}{\log T}  \cdot \max_{T^{\beta}\leqslant t\leqslant   T}\left|  \F_{\ell}\Big(\sigma+it\Big)  \right|^2 + |\mathscr{M}|\cdot O\Big( T^{\beta + \kappa}  (\log T)^{2\ell +1} \Big) +  |\mathscr{M}| \cdot O\Big(\frac{T^{1-\frac{2}{5}\beta}}{(\log T)^2}   \Big) .
\end{align}

Next, let 

\begin{equation}
    G_{\sigma}(t):\,= \sum_{m, n \geqslant 1} \frac{\widehat{K}(\log nm)}{n^{\sigma+it} \cdot m^{\sigma-it}} a_{\ell}(n)a_{\ell}(m)  \end{equation}

and set
\begin{align*}
I_1(T) :\;&= \int_{|t| \geqslant \A}  G_{\sigma}(t)|R(t)|^2\Phi\Big(\frac{t\log T }{T}  \Big)dt\\
 I_2(T)  :\;&= - 2\pi  \ell! \int_{|t| \geqslant \A}  \Delta^{+}\cdot|R(t)|^2\Phi\Big(\frac{t\log T }{T}  \Big)dt\\
 I_3(T) :\;&= - 2\pi  \ell! \int_{|t| \geqslant \A}  \Delta^{-}\cdot|R(t)|^2\Phi\Big(\frac{t\log T }{T}  \Big)dt \,.
\end{align*}

By the convolution formula $(\ref{ConvFom})$, one obtains
\[I(T) = I_1(T) +I_2(T)+ I_3(T).\]

We will bound  $I_2(T), I_3(T)$ as follows:
\begin{align}\label{I2I3}
    |I_2(T)| + |I_3(T)| \ll |\M|\cdot \frac{T^{\kappa+\frac{5}{4}\varepsilon}}{\log T}\,.
\end{align}

By Cauchy's integral for derivatives and the explicit expression for $K$,  we have the following estimates for all $0 \leqslant n  \leqslant l,$
\begin{align}
    \Big(\frac{d}{dz}\Big)^n K(i \sigma - iz)\Big |_{z = 1 - it} \ll \max_{|\alpha| = \frac{1}{8}}\Big | K\Big(i \sigma -i(1 - it)+\alpha\Big)\Big | \ll \frac{T^{\frac{5}{4}\varepsilon}}{\log T \cdot | t |^2},\quad \forall |t|\geqslant \A,
\end{align}
where the implied constants depend on $\varepsilon$ and $\ell$ only.

And trivially, for all $0 \leqslant m  \leqslant l$, one has
\begin{align}
   \Big(\frac{d}{dz}\Big)^m \F_{\ell}(z-it)\Big |_{z = 1 - it} \ll 1+ \Big|\zeta^{(\ell+ m)}(1- 2it)\Big| \ll |t|^{\frac{1}{8}}, \quad \forall |t|\geqslant \A,
\end{align}
where the implied constants depend only on $\ell$.

Note that there are  finitely many non-negative integer  pairs  $(m,\,n)$ satisfying $m +n = \ell$, so
\begin{align}
    I_3(T) \ll R(0)^2 \int_{|t| \geqslant \A}  \frac{T^{\frac{5}{4}\varepsilon} \cdot |t|^{\frac{1}{8}}}{\log T \cdot | t |^2} dt \ll  (T^{\kappa} \cdot |\mathscr{M}|) \cdot  \frac{T^{\frac{5}{4}\varepsilon} }{\log T }\int_{|t| \geqslant \A} \frac{ |t|^{\frac{1}{8}}}{| t |^2} dt \ll |\M|\cdot \frac{T^{\kappa+\frac{5}{4}\varepsilon}}{\log T}\,.
\end{align}
Proceed similarly for  $I_2(T)$, so we get \eqref{I2I3}.

 Next, in order to relate $I_1(T)$ to the GCD sums,  we would like to use Fourier transform on the whole real line. So set
\begin{align*}
    \widetilde{I_1}(T) :\;&= \int_{-\infty}^{\infty}  G_{\sigma}(t)|R(t)|^2\Phi\Big(\frac{t\log T }{T}  \Big)dt\,.
\end{align*}
By $(\ref{Sin:fourier})$, $ \widehat{K}(\log nm) = 0 $ if $ mn \geqslant T^{2 \varepsilon}.$ Clearly, $a_l(n)/n^{\sigma}\ll 1.$ So one can get
\begin{align*}
    \int_{|t| \leqslant \A}  G_{\sigma}(t)|R(t)|^2\Phi\Big(\frac{t\log T }{T}  \Big)dt &= \int_{|t| \leqslant \A}  \Big(\sum_{m, n \geqslant 1} \frac{\widehat{K}(\log nm)}{n^{\sigma+it} \cdot m^{\sigma-it}} a_{\ell}(n)a_{\ell}(m)\Big)\Big|R(t)\Big|^2\Phi\Big(\frac{t\log T }{T}  \Big)dt\\
    &\ll R(0)^2  \sum_{m, n \geqslant 1} \frac{\widehat{K}(\log nm)}{n^{\sigma} \cdot m^{\sigma}} a_{\ell}(n)a_{\ell}(m)\\
    &\ll (T^{\kappa} \cdot |\mathscr{M}|)\cdot \sum_{mn \leqslant T^{2\varepsilon}} 1\\
     &\ll T^{\kappa+ 4\varepsilon} \cdot |\mathscr{M}|\,.
\end{align*}
We obtain $I_1(T)=\widetilde{I_1}(T)+ |\mathscr{M}| \cdot O(T^{\kappa+ 4\varepsilon})$.

Thus

\begin{align}\label{I1: Upper}
    \widetilde{I_1}(T) \ll \frac{T|\mathcal{M}|}{\log T}  \cdot \max_{T^{\beta}\leqslant t\leqslant   T}\left|  \F_{\ell}\Big(\sigma+it\Big)  \right|^2 +  |\mathscr{M}| \cdot O(T^{\kappa+ 4\varepsilon}) &+ |\mathscr{M}|\cdot O\Big( T^{\beta + \kappa}  (\log T)^{2\ell +1} \Big)\\ \nonumber &+  |\mathscr{M}| \cdot O\Big(\frac{T^{1-\frac{2}{5}\beta}}{(\log T)^2}   \Big) .
\end{align}

We compute the integral $\widetilde{I_1}(T)$ by expanding the product of the  resonator and the infinite series of $G_{\sigma}(t)$, then integrate  term by term, as in \cite[page 1699]{BS1}. Using the fact $a_{\ell}(k) \geqslant 1$ for every $k$ and $\widehat{K}(\log jk) \geqslant \pi/2$ if $ jk \leqslant T^{\varepsilon}$, one gets
\begin{align*}  \widetilde{I_1}(T) & = \frac{T\sqrt{2\pi}}{\log T}\sum_{m, n \in \mathcal{M}' }r(m)r(n)\sum_{j,k \geqslant 1}a_{\ell}(j)a_{\ell}(k)\frac{\widehat{K}(\log jk)}{(jk)^{\sigma}}\Phi\Big(\frac{T}{\log T} \log \frac{mj}{nk}\Big)\\
& \geqslant \frac{T\sqrt{2\pi}}{\log T}\sum_{m, n \in \mathcal{M}' }r(m)r(n)\sum_{j,k \geqslant 1}\frac{\widehat{K}(\log jk)}{(jk)^{\sigma}}\Phi\Big(\frac{T}{\log T} \log \frac{mj}{nk}\Big)\\
& \gg \frac{T}{\log T}\sum_{1 \leqslant jk \leqslant T^{\varepsilon}}\frac{1}{(jk)^{\sigma}}\sum_{m, n \in \mathcal{M}' }r(m)r(n)\Phi\Big(\frac{T}{\log T} \log \frac{mj}{nk}\Big)\,.
\end{align*}

Next, proceed as in \cite{delaBT} (following ideas from \cite{BS1}),
\begin{align}\label{I1: Lower}
    \widetilde{I_1}(T) \gg \frac{T}{\log T} \sum_{\substack{m, n \in \mathcal{M}\\ \frac{[m, n]}{(m, n)}\leqslant T^{\varepsilon}}}\frac{(m,n)^{\sigma}}{[m, n]^{\sigma}} \gg \frac{T}{\log T}\Big(S_{\sigma}(\M)- T^{\varepsilon(\frac{1}{3} - \sigma)} \cdot S_{\frac{1}{3}}(\M) \Big)\,.
\end{align}

Combining $(\ref{I1: Upper})$ with $(\ref{I1: Lower})$, we have 
\begin{align}\label{Max: Fl}
    &\max_{T^{\beta}\leqslant t\leqslant   T}\left|  \F_{\ell}\Big(\sigma+it\Big)  \right|^2 +  O\Big( T^{\beta + \kappa  -1}  (\log T)^{2\ell +2} \Big) +   O\Big(T^{\kappa+ 4\varepsilon-1}\log T\Big) +    O\Big(\frac{T^{-\frac{2}{5}\beta}}{\log T}   \Big)   \\ \nonumber & \gg \frac{S_{\sigma}(\M)}{|\M|} - T^{\varepsilon(\frac{1}{3} - \sigma)} \cdot \frac{  S_{1/3}(\M)}{|\M|} \,.
\end{align}

Next, we will consider the two cases $ \sigma = \frac{1}{2}$ and $ \sigma \in (\frac{1}{2}, 1) $ separately. 

\textbf{Case 1:\,  $\sigma = \frac{1}{2}$.} 

In this case, let $\M$ be the set in $(\ref{GCD: 1/2})$ with $|\M| = N $. Recall that $N =[T^{\kappa}],$ so
\begin{align}
\frac{ S_{1/2}(\M)}{|\M|} \gg \emph{\emph{exp}}  \Big\{(2\sqrt{2\, \kappa} +o(1))\sqrt{\frac{\log T \,\log_3 T}{\log_2 T}}\Big\}\,.
\end{align}

Also, in \cite[page 128]{delaBT}, de la Bret\`eche and Tenenbaum  showed that for this set $\M$, 
\begin{align}
\frac{ S_{1/3}(\M)}{|\M|} \ll \emph{\emph{exp}}\Big\{ y_{\M}^{\frac{2}{3}} \Big\}, \quad \emph{\emph{where}} \quad y_{\M} \ll (\log T)^{\frac{6}{5}}.
\end{align}

So the second term on the right-hand side of  $(\ref{Max: Fl})$ is $o(1)$. And clearly, the big $O(\cdot)$ terms in $(\ref{Max: Fl})$  can be ignored. Thus
\begin{align}
 \max_{T^{\beta}\leqslant t\leqslant T}\left|\zeta^{(\ell)}\Big(\frac{1}{2}+it\Big)\right| \gg   \max_{T^{\beta}\leqslant t\leqslant   T}\left|  \F_{\ell}\Big(\frac{1}{2}+it\Big)  \right| + O(1) \gg  \emph{\emph{exp}}  \Big\{(\sqrt{2\, \kappa} +o(1))\sqrt{\frac{\log T \,\log_3 T}{\log_2 T}}\Big\}\,.
\end{align}

\textbf{Case 2:\,  $\sigma \in (\frac{1}{2}, 1)$.} 

In this case, let $\M$ be the set in $(\ref{GCD: sigma})$ with $|\M| = N $. Again, $N =[T^{\kappa}],$ so
\begin{align}
\frac{ S_{\sigma}(\M)}{|\M|} \gg  \emph{\emph{exp}}  \Big\{\frac{\widetilde{c}}{1-\sigma}\cdot  \frac{(\log N)^{1 - \sigma}}{(\log_2 N)^{\sigma}}\Big\}  \gg  \emph{\emph{exp}}  \Big\{\frac{\widetilde{c}\cdot\kappa^{1-\sigma}}{1-\sigma}\cdot  \frac{(\log T)^{1 - \sigma}}{(\log_2 T)^{\sigma}} \Big\}.
\end{align}

For the second term on the right-hand side of $(\ref{Max: Fl})$ , we have
\begin{align*}
    T^{\varepsilon(\frac{1}{3} - \sigma)} \cdot \frac{  S_{1/3}(\M)}{|\M|} \ll T^{\varepsilon(\frac{1}{3} - \sigma)} \emph{\emph{exp}}  \Big\{C_{\frac{1}{3}}\cdot  \frac{(\kappa \log T)^{\frac{2}{3}}}{(\log_2 T)^{\frac{1}{3}}} \Big\} \ll \emph{\emph{exp}}  \Big\{ -\frac{\varepsilon}{6} \log T +   C_{\frac{1}{3}}\cdot  \frac{(\kappa \log T)^{\frac{2}{3}}}{(\log_2 T)^{\frac{1}{3}}}  \Big\} = o(1).
\end{align*}

Hence
\begin{align}
 \max_{T^{\beta}\leqslant t\leqslant T}\left|\zeta^{(\ell)}\Big(\sigma+it\Big)\right| \gg   \max_{T^{\beta}\leqslant t\leqslant   T}\left|  \F_{\ell}\Big(\sigma+it\Big)  \right| + O(1) \gg  \emph{\emph{exp}}  \Big\{\frac{\widetilde{c}\cdot\kappa^{1-\sigma}}{2(1-\sigma)}\cdot  \frac{(\log T)^{1 - \sigma}}{(\log_2 T)^{\sigma}} \Big\}.  
\end{align}
Make $\kappa$ slightly larger in the beginning then one can get $(B)$.

\end{proof}

\section{Proof of Proposition \ref{GCD_log}}

The idea of the proof is basically the same as in the proof of  Proposition \ref{maxRatio}. The new ingredient is G\'{a}l's identity. 

In this section, in order to avoid confusion about notations, we use the notation $(m\otimes n)$ for the ordered pair of $m$ and $n$.

\begin{proof}

Let $\PP(\rr, b) = p_1^{b - 1} \cdot \ldots \cdot p_{\rr}^{b - 1}$\,,
where $p_n$ denotes the $n$-th prime. Define $\M$ to be the set of divisors of $\PP(\rr, b)$, then $|\M| = b^{\rr}.$
 By G\'{a}l's identity  \cite{G},
$$
 \sum_{m, n\in \M} \frac{(m,n)}{[m,n]}  = \prod_{i \leqslant \rr}
\Big ( b + 2 \sum_{\nu = 1}^{b - 1} \frac{b - \nu}{p_i^{\nu}}
\Big )\,.
$$

 Let $\rr = [ \log \kk / \log\log \kk ]$, then $p_{\rr} \sim \log \kk$ by the prime
number theorem.  Let $b$ be the integer satisfying that
$$
b^{\rr} \leqslant \kk < (b + 1)^{\rr},
$$
then  $b^{\rr} \sim \kk$, as $\kk \rightarrow \infty$.
 Choose a set $\M' \subset \N$  such that $\M \subset \M'$ and $|\M'| = \kk.$  
 
 Following   Lewko-Radziwi\l\l  \,\,in \cite{Lewko}, we use G\'{a}l's identity for the GCD sum then split the product into two parts:
\begin{align}\label{GCD: Gal}
\nonumber \sum_{m, n\in \M} \frac{(m,n)}{[m,n]} & = b^r \prod_{i \leqslant r} \Big ( 1 + 2 \sum_{v = 1}^{b - 1} \frac{1}{p_i^v}
\cdot \Big ( 1 - \frac{v}{b} \Big ) \Big ) \\ & \geqslant (1 + o(1)) \kk
\prod_{i \leqslant \rr} \Big (1 - \frac{1}{p_i} \Big )^{-2}
\times
\prod_{i \leqslant \rr} \big (1 + 2 \sum_{v = 1}^{b - 1}
\frac{1}{p_i^v} \cdot\Big ( 1 - \frac{v}{b} \Big )
\Big ) \Big (1 - \frac{1}{p_i} \Big )^2\,.
\end{align}
By Mertens' theorem, the first product is asymptotically
equal to $(e^{\gamma} \log p_{\rr})^2 \sim (e^\gamma \log \log \kk)^2$
as $\kk \rightarrow \infty$.  The  second product converges 
as $\kk \rightarrow \infty$ to
$$
\prod_{p} \Big (1 + 2 \sum_{v = 1}^{\infty} \frac{1}{p^v}
\Big ) \Big ( 1 - \frac{1}{p} \Big )^2 = \frac{6}{\pi^2}.
$$

Next, let $~~\delta = \ell \cdot (2\ell+1)^{-1}$ and define the sets $\M_{\delta}^{(1)},\,~\M_{\delta}^{(2)}$ as follows:

$\M_{\delta}^{(1)}:\, = \Big\{(m\otimes n) \in \M \times \M \Big|\,\forall i > \rr^{\delta},\,\,  \alpha_i = \min\{\alpha_i, \beta_i\},$ ~~where $m$ and $n$ have  prime factorizations as ~~~ $ m = p_1 ^{\alpha_1} p_2 ^{\alpha_2} \cdots p_{\rr} ^{\alpha_{\rr}},\, ~~\emph{\emph{and}}~~~  n = p_1 ^{\beta_1} p_2 ^{\beta_2} \cdots p_{\rr} ^{\beta_{\rr}}\Big\}\,.$

$\M_{\delta}^{(2)}:\, = \Big\{(m\otimes n) \in \M \times \M \Big|\,\forall i > \rr^{\delta},\,\,  \beta_i = \min\{\alpha_i, \beta_i\},$ ~~where $m$ and $n$ have  prime factorizations as ~~~ $ m = p_1 ^{\alpha_1} p_2 ^{\alpha_2} \cdots p_{\rr} ^{\alpha_{\rr}},\, ~~\emph{\emph{and}}~~~  n = p_1 ^{\beta_1} p_2 ^{\beta_2} \cdots p_{\rr} ^{\beta_{\rr}}\Big\}\,.$

Then define $\M_{\delta}$ to be the union of the above two sets and $\,\,\overline{\M_{\delta}}\,\,$ to be the  complement of $\M_{\delta}$ in $\M\times \M$:
\[\M_{\delta}:\, = \M_{\delta}^{(1)} \bigcup \M_{\delta}^{(2)}\,, \quad \quad \quad \overline{\M_{\delta}}:\, = (\M \times \M)\setminus \M_{\delta} \,.\]

Now we split the GCD sum into two parts:
\begin{align}\label{split}
 \sum_{m, n\in \M} \frac{(m,n)}{[m,n]} = \sum_{(m\otimes n)\in \M_{\delta}} \frac{(m,n)}{[m,n]} + \sum_{(m\otimes n)\in \overline{\M_{\delta}}} \frac{(m,n)}{[m,n]}\,.
\end{align}

By symmetry, we have
\begin{align}\label{sym}
 \sum_{(m\otimes n)\in \M_{\delta}} \frac{(m,n)}{[m,n]}  \leqslant 2 \sum_{(m\otimes n)\in \M_{\delta}^{(1)}} \frac{(m,n)}{[m,n]}\,.
\end{align}

By the definition of $\M_{\delta}^{(1)}$ and  G\'{a}l's identity, we have
\begin{align*}
   \sum_{(m\otimes n)\in \M_{\delta}^{(1)}} \frac{(m,n)}{[m,n]} = &\sum_{\substack{i \leqslant \rr^{\delta}\\0 \leqslant \alpha_i \leqslant b -1\\0 \leqslant \beta_i \leqslant b -1}} \,\,\,\,\,\, \sum_{\substack{\rr^{\delta}< i \leqslant \rr\\0 \leqslant \alpha_i \leqslant \beta_i \leqslant b -1}}  \prod_{i \leqslant \rr^{\delta}}  p_i^{-|\alpha_i - \beta_i|} \prod_{\rr^{\delta}< i \leqslant \rr}  \frac{1}{p_i^{ \beta_i - \alpha_i }}\\
   =&     \prod_{i\leqslant \rr^{\delta}}\Big(b + 2\sum_{\nu = 1}^{b-1}\frac{b-\nu}{p_i^{\nu}} \Big)  ~~~ \cdot  \prod_{\rr^{\delta}< i \leqslant \rr}\Big(\sum_{x_i = 0}^{b-1}\frac{b-x_i}{p_i^{x_i}}\Big)\\
   = &  \prod_{i\leqslant \rr^{\delta}}\Big(b + 2\sum_{\nu = 1}^{b-1}\frac{b-\nu}{p_i^{\nu}} \Big)  ~~~ \cdot  \prod_{\rr^{\delta}< i \leqslant \rr}~\Big( \Big(\frac{1}{p_i}\Big)^{b+1} -\frac{b+1}{p_i} +b\Big) \cdot \Big( 1- \frac{1}{p_i}\Big)^{-2}\\
   = & b^{\rr} \prod_{i\leqslant \rr^{\delta}}\Big(1 + 2\sum_{\nu = 1}^{b-1}  \Big(1 - \frac{\nu}{b}\Big)p_i^{-\nu} \Big)  ~~~ \cdot  \prod_{\rr^{\delta}< i \leqslant \rr}~\Big(\frac{1}{b}\cdot \Big(\frac{1}{p_i}\Big)^{b+1} -\frac{1+\frac{1}{b}}{p_i} +1\Big) \cdot \Big( 1- \frac{1}{p_i}\Big)^{-2}\\= & b^{\rr} \prod_{i\leqslant \rr^{\delta}}\Big(1 + 2\sum_{\nu = 1}^{b-1}  \Big(1 - \frac{\nu}{b}\Big)p_i^{-\nu} \Big)\Big( 1- \frac{1}{p_i}\Big)^{2}\\  ~~ & \times   \prod_{\rr^{\delta}< i \leqslant \rr}~\Big(\frac{1}{b}\cdot  \Big(\frac{1}{p_i}\Big)^{b+1} -\frac{1+\frac{1}{b}}{p_i} +1\Big)  \times \prod_{i \leqslant \rr}~ \Big( 1- \frac{1}{p_i}\Big)^{-2}\,.
\end{align*}

Again, we have $b^{\rr} \sim \kk$, the first product converges to $6/ \pi^2$, and the third product is    asymptotically
equal to $(e^{\gamma} \log p_\rr)^2 \sim (e^\gamma \log \log \kk)^2$
as $ \kk \rightarrow \infty$.

For the second product, it can be bounded as 
\begin{align*}
     \prod_{\rr^{\delta}< i \leqslant \rr}~\Big(\frac{1}{b}\cdot  \Big(\frac{1}{p_i}\Big)^{b+1} -\frac{1+\frac{1}{b}}{p_i} +1\Big) \leqslant \prod_{\rr^{\delta}< i \leqslant \rr}~ \Big( 1- \frac{1}{p_i}\Big) \,.
\end{align*}
And by  Mertens' theorem and the prime number theorem, we have
\begin{align*}
     \lim_{\rr \to \infty} \prod_{\rr^{\delta}< i \leqslant \rr}~ \Big( 1- \frac{1}{p_i}\Big) = \delta\,.
\end{align*}

As a result, we obtain that 
\begin{align*}
\sum_{(m\otimes n)\in \M_{\delta}^{(1)}} \frac{(m,n)}{[m,n]}\leqslant \Big( \delta + o(1)\Big) \kk \cdot \frac{6}{\pi^2} \cdot  (e^\gamma \log \log \kk)^2\,.
\end{align*}
Hence by  \eqref{GCD: Gal}, \eqref{split}, \eqref{sym}, we get
\begin{align*}
    \sum_{(m\otimes n)\in \overline{\M_{\delta}}} \frac{(m,n)}{[m,n]} \geqslant \Big(1-2\delta + o(1)\Big) \kk \cdot \frac{6}{\pi^2} \cdot  (e^\gamma \log \log \kk)^2\,.
\end{align*}

By the construction of $~~~\overline{\M_{\delta}}$, ~~~if $ (m\otimes n)\in \overline{\M_{\delta}} $, then 
\begin{align*}
\log \Big(\frac{m}{(m,n)}\Big) \geqslant \Big( \delta + o(1)\Big)\cdot \log\log\kk\,, \quad \quad  \log\Big(\frac{n}{(m,n)}\Big)    \geqslant \Big( \delta + o(1)\Big)\cdot \log\log\kk\,.
\end{align*}

Thus 
\begin{align*}
&\sum_{m, n\in \M'} \frac{(m,n)}{[m,n]}\log^{\ell} \Big(\frac{m}{(m,n)}\Big)\log^{\ell}\Big(\frac{n}{(m,n)}\Big) \\
\geqslant &\sum_{m, n\in \M} \frac{(m,n)}{[m,n]}\log^{\ell} \Big(\frac{m}{(m,n)}\Big)\log^{\ell}\Big(\frac{n}{(m,n)}\Big) \\
    \geqslant &\sum_{(m\otimes n)\in \overline{\M_{\delta}}} \frac{(m,n)}{[m,n]} \log^{\ell} \Big(\frac{m}{(m,n)}\Big)\log^{\ell}\Big(\frac{n}{(m,n)}\Big) \geqslant \Big( (1-2\delta)\,\delta^{2\ell}  + o(1)\Big) \kk \cdot \frac{6}{\pi^2} \cdot  e^{2\gamma} \cdot (\log \log \kk)^{2 + 2\ell}\,. 
\end{align*}

By our choice of $~~\delta = \ell \cdot (2\ell+1)^{-1}$, we are done.
\end{proof}

\section{ A Short Proof For A Weaker Result}\label{Short}

One can  use the method of Bohr-Landau (see \cite[Thm 8.5]{T}) to prove the weaker result that $\zeta^{(\ell)} (1+it) = \Omega((\log_2 t)^{\ell+1}),$ when $\ell \in \mathbb N$ is fixed.

\begin{proof}
Write $ s= \sigma + it $. When $\sigma > 1$,
\[ (-1)^{\ell} \zeta^{(\ell)}(s) = \sum_{n = 2}^{\infty} \frac{(\log n)^{\ell}}{n^{\sigma + it}} = \sum_{n = 2}^{N} \frac{(\log n)^{\ell}}{n^{\sigma + it}} + \sum_{n = N + 1}^{\infty} \frac{(\log n)^{\ell}}{n^{\sigma + it}}. \]
For given positive integers $N$ and $q$, by Dirichlet's theorem, there exists $t \in [1,\, q^N]$, such that  $\cos{(t \log n)} \geqslant \cos{(2\pi/q)} $ for all  integers $n \in [1, N]$. Hence
\begin{align*}
 |\zeta^{(\ell)}(s)| &\geqslant \sum_{n = 2}^{N}  \frac{(\log n)^{\ell}}{n^{\sigma}} \cos{(t \log n)}\,\,  -\,\,\sum_{n = N + 1}^{\infty} \frac{(\log n)^{\ell}}{n^{\sigma }}\\  
 &\geqslant \cos{(\frac{ 2\pi}{q} )} \cdot \sum_{n = 2}^{N}  \frac{(\log n)^{\ell}}{n^{\sigma}} \,\,  -\,\,\sum_{n = N + 1}^{\infty} \frac{(\log n)^{\ell}}{n^{\sigma }}\\  
 &\geqslant \cos{(\frac{ 2\pi}{q} )} \cdot \sum_{n = 2}^{\infty}  \frac{(\log n)^{\ell}}{n^{\sigma}} \,\,  -\,\,2\sum_{n = N + 1}^{\infty} \frac{(\log n)^{\ell}}{n^{\sigma }}.
\end{align*} 

Take $q = 8$ to get 
\begin{align}\label{lower: sigma = 1}
   |\zeta^{(\ell)}(s)| &\geqslant  \cos{(\frac{ 2\pi}{8} )} \cdot \sum_{n = 2}^{\infty}  \frac{(\log n)^{\ell}}{n^{\sigma}} \,\,  -\,\,  2\sum_{n = N + 1}^{\infty} \frac{(\log n)^{\ell}}{n^{\sigma }}, \quad N\, \log 8 > \log t \,.
\end{align}

One can compute that
\begin{align}
    \sum_{n = 2}^{\infty}  \frac{(\log n)^{\ell}}{n^{\sigma}} > O_{\ell}(1) + \int_{1}^{\infty}\frac{(\log x)^{\ell}}{x^{\sigma}}dx > O_{\ell}(1) + \frac{\ell !}{(\sigma -1)^{\ell+1}}\,,
\end{align}

and for  large $N$ that
\begin{align}
   \sum_{n = N + 1}^{\infty} \frac{(\log n)^{\ell}}{n^{\sigma }} <  \int_{N}^{\infty}\frac{(\log x)^{\ell}}{x^{\sigma}}dx \leqslant (\ell+1)\cdot (\log N^{\sigma -1})^{\ell}\cdot N^{1-\sigma} \cdot \frac{\ell !}{(\sigma -1)^{\ell+1}}\,.
\end{align}

Now fix a positive constant $A$ (only depending on $\ell$) such that $(\ell+1)A^{\ell} \cdot e^{-A} <   1/12$ and let $\sigma - 1 = A/\log N.$ Combining with $(\ref{lower: sigma = 1})$ gives that
\begin{align}\label{large: sigma >1}
   |\zeta^{(\ell)}(s)|  > \frac{\ell !}{(\sigma -1)^{\ell+1}}\cdot (\frac{1}{2} - 2 \cdot \frac{1}{12}) > \frac{\ell !}{3}\cdot \frac{(\log N)^{\ell +1}}{A^{\ell +1}} \gg (\log\log t)^{\ell +1}\,.
\end{align}

Next, define \[f(s):\, = \frac{\zeta^{(\ell)}(s)}{(\log\log s)^{\ell+1}}.\] Suppose that $\zeta^{(\ell)}(1 + it) \neq \Omega( (\log\log t)^{\ell +1})$. So $f(1+it) = o(1)$.
Clearly, $f(2+it) = o(1)$. Then we get a contradiction with \eqref{large: sigma >1} by the Phragm\'en--Lindel\"of principle.

\end{proof}

\section{Discussions, Open Problems and Conjectures}
Let $\ell \in \N$ and $\sigma \leqslant 1$, define the following normalized log-type GCD sums as:
\begin{equation*}  
\Gamma_{\sigma}^{(\ell)}(N):\,= \max_{|\M| = N} \frac{1}{N}\sum_{m, n\in \M} \frac{(m,n)^{\sigma}}{[m,n]^{\sigma}}\log^{\ell} \Big(\frac{m}{(m,n)}\Big)\log^{\ell}\Big(\frac{n}{(m,n)}\Big)\,.
\end{equation*}
\begin{problem}
Given $\sigma \in (0, 1]$ and $\ell \in \N$, optimize $~\Gamma_{\sigma}^{(\ell)}(N)$.
\end{problem}

\begin{remark}
We are particularly interested in the case $\sigma = 1.$ We think  that one can  find constants $C_{\ell}$ such that $~\Gamma_{1}^{(\ell)}(N) \leqslant C_\ell \,(\log_2 N)^{2\ell + 2}$ . If this is the case, find the asymptotically sharp  constant. When $\sigma \in (0, \frac{1}{2})$, is it true that $ N^{1-2\sigma} (\log N)^{2\ell + \alpha(\sigma) } \ll \Gamma_{\sigma}^{(\ell)}(N) \ll N^{1-2\sigma} (\log N)^{2\ell + \beta(\sigma) }$ for some positive constants $\alpha(\sigma), \beta(\sigma)$? (These bounds are inspired by the work of Bondarenko-Hilberdink-Seip in \cite{BHS}, where the authors studied GCD sums for $\sigma \in (0, \frac{1}{2})$ ).
\end{remark}

We are also interested in extreme values of $|\zeta^{(\ell)}(\sigma\!+\! it)|$ in the left half strip. It is unlike the situation of the zeta function, where the values on the left half strip can be easily determined by the right half strip via the functional equation. Thus it worth to study $\Gamma_{\sigma}^{(\ell)}(N)$  when $\sigma < \frac{1}{2}$, even for this reason.

\begin{problem}
Study extreme values of $\,|\zeta^{(\ell)}(\sigma\!+\! it)|$, when $\sigma \in (-\infty , \, \frac{1}{2})$ and $ \ell \in \N$  are given. 
\end{problem}

We can use Theorem \textbf{A} of Ingham \cite{Ingham} to prove  the following claim, from which we obtain the lower bounds \eqref{eq:sigma < 1/2} on maximum of $\,|\zeta^{(\ell)}(\sigma\!+\! it)|$. But we expect something slightly better.
\begin{claim}\label{moment: sigma<1/2}
 Let $\ell \in \mathbb{N}$ and $\sigma \in (-\infty , \, \frac{1}{2})$ be fixed. Then 
\begin{equation*}
    \int_0^T \big|\zeta^{(\ell)}(\sigma\!+\! it)\big| ^2 dt  \sim (2\pi)^{2\sigma-1}\frac{\zeta(2-2\sigma)}{2-2\sigma} T^{2-2\sigma} (\log \frac{T}{2\pi})^{2\ell}.
\end{equation*}
\end{claim}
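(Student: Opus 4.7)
\medskip

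\textbf{Proof proposal for Claim \ref{moment: sigma<1/2}.}

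The plan is to transfer the problem from the line $\Re s=\sigma<1/2$ to the line $\Re s = 1-\sigma>1/2$ via the functional equation and then invoke Lemma \ref{moment1}. Write $\zeta(s)=\chi(s)\zeta(1-s)$ with $\chi(s)=2^s\pi^{s-1}\sin(\pi s/2)\Gamma(1-s)$. Leibniz's rule gives
\[
\zeta^{(\ell)}(s)=\sum_{k=0}^{\ell}\binom{\ell}{k}(-1)^{\ell-k}\chi^{(k)}(s)\,\zeta^{(\ell-k)}(1-s).
\]
The first step is to pin down the asymptotic behaviour of each $\chi^{(k)}(\sigma+it)$ as $t\to\infty$. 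Starting from the logarithmic-derivative identity $(\log\chi)'(s)=\log(2\pi)+\tfrac{\pi}{2}\cot(\pi s/2)-\psi(1-s)$ and Stirling's formula, one obtains $(\log\chi)'(\sigma+it)=-\log(t/(2\pi))+O(1/t)$, while $(\log\chi)^{(j)}(\sigma+it)=O_j(t^{-(j-1)})$ for $j\geqslant 2$. Faà di Bruno's formula then yields
\[
\chi^{(k)}(\sigma+it)=\chi(\sigma+it)\Bigl[\bigl(-\log(t/(2\pi))\bigr)^{k}+O_k\bigl((\log t)^{k-1}\bigr)\Bigr],
\qquad |\chi(\sigma+it)|\sim (t/(2\pi))^{1/2-\sigma}.
\]

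The heart of the argument is to identify the $k=\ell$ term as the sole contributor to the main term. By the above, its modulus squared satisfies
\[
|\chi^{(\ell)}(\sigma+it)|^{2}=(t/(2\pi))^{1-2\sigma}\bigl(\log(t/(2\pi))\bigr)^{2\ell}\bigl(1+O(1/\log t)\bigr).
\]
Applying Lemma \ref{moment1} with $\mu=\nu=0$ and $\alpha_1=\alpha_2=1-\sigma>1/2$ gives $F(T):=\int_{1}^{T}|\zeta(1-\sigma+it)|^{2}dt\sim\zeta(2-2\sigma)\,T$. A Riemann--Stieltjes integration by parts then computes
\[
\int_{0}^{T}(t/(2\pi))^{1-2\sigma}\bigl(\log(t/(2\pi))\bigr)^{2\ell}\,dF(t)
 \sim \frac{(2\pi)^{2\sigma-1}\zeta(2-2\sigma)}{2-2\sigma}\,T^{2-2\sigma}\bigl(\log(T/(2\pi))\bigr)^{2\ell},
\]
the arithmetic factor $1/(2-2\sigma)$ arising from the identity $1-\tfrac{1-2\sigma}{2-2\sigma}=\tfrac{1}{2-2\sigma}$ after collecting the boundary and interior contributions. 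This is precisely the claimed main term.

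It remains to bound the remaining summands and cross terms. For $0\leqslant k\leqslant \ell-1$, Lemma \ref{moment1} (now with $\mu=\nu=\ell-k$, $\alpha_1=\alpha_2=1-\sigma$) gives $\int_{1}^{T}|\zeta^{(\ell-k)}(1-\sigma+it)|^{2}dt\sim \zeta^{(2\ell-2k)}(2-2\sigma)\,T$, so the same partial summation shows that the $L^{2}$-norm of $\chi^{(k)}(\sigma+it)\zeta^{(\ell-k)}(1-\sigma-it)$ over $[0,T]$ is $O\bigl(T^{2-2\sigma}(\log T)^{2k}\bigr)=O\bigl(T^{2-2\sigma}(\log T)^{2\ell-2}\bigr)$. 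Cauchy--Schwarz then handles the cross terms in $|\zeta^{(\ell)}(\sigma+it)|^{2}$, giving an overall error of $O\bigl(T^{2-2\sigma}(\log T)^{2\ell-1}\bigr)$, which is swallowed by the main term.

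The only mildly delicate step is the Stirling-based asymptotic expansion of $\chi^{(k)}$: one must verify that the non-leading terms from Faà di Bruno are genuinely $O((\log t)^{k-1})$ and not just $O((\log t)^{k})$, so that the constant in front of $(\log(T/(2\pi)))^{2\ell}$ comes out exactly as $(2\pi)^{2\sigma-1}\zeta(2-2\sigma)/(2-2\sigma)$ with no hidden contribution from the subleading $\chi^{(k)}$. Everything else is routine partial summation and Cauchy--Schwarz.
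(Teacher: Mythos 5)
Your proof is correct, but you take a genuinely different route from the paper. The paper invokes Ingham's Theorem~A directly: that theorem gives an exact formula for $\int_0^T\zeta^{(\mu)}(a+it)\zeta^{(\nu)}(b-it)\,dt$ valid for $a+b<1$, which the author simply specialises to $\mu=\nu=\ell$, $a=b=\sigma$ and then evaluates the resulting integral $F_{2\ell}(T/2\pi,2\sigma)=\int_1^T\partial_s^{2\ell}\big(\zeta(s)+x^{1-s}\zeta(2-s)\big)\big|_{s=2\sigma}\,dx$, whose dominant term is $\zeta(2-2\sigma)\int_1^T x^{1-2\sigma}(\log x)^{2\ell}\,dx$. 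You instead push the problem across the critical line via the functional equation: writing $\zeta^{(\ell)}(s)=\sum_k\binom{\ell}{k}(-1)^{\ell-k}\chi^{(k)}(s)\zeta^{(\ell-k)}(1-s)$, extracting the asymptotics $\chi^{(k)}(\sigma+it)\sim\chi(\sigma+it)(-\log(t/2\pi))^k$ from Stirling and Fa\`a di Bruno, and then applying the Hadamard--Landau--Schnee Lemma (the paper's Lemma~\ref{moment1}, which requires $\alpha_1+\alpha_2>1$) on the line $\Re s=1-\sigma$, followed by partial summation against $F(T)=\int_1^T|\zeta(1-\sigma+it)|^2\,dt\sim\zeta(2-2\sigma)T$. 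Your route has the advantage of only relying on the restricted moment result stated in the paper (Lemma~\ref{moment1}) rather than the stronger Theorem~A of Ingham; the cost is the extra bookkeeping of the $\chi^{(k)}$ expansion, the dominance argument isolating $k=\ell$, and the Cauchy--Schwarz bound on cross terms. Both yield the same constant, and your identification of $1-\tfrac{1-2\sigma}{2-2\sigma}=\tfrac{1}{2-2\sigma}$ after the Riemann--Stieltjes integration by parts is correct, so the answers agree.
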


\begin{proof}
In Theorem  \textbf{A} of Ingham \cite{Ingham}, let $\mu = \nu = \ell$ and $a = b = \sigma$, then
\[
\int_0^T \big|\zeta^{(\ell)}(\sigma\!+\! it)\big| ^2 dt = 2\pi F_{2\ell}(\frac{T}{2\pi}, 2\sigma) + R_{\ell}(T, \sigma),\]

where 
\[
R_{\ell}(T, \sigma) = O(T^{\max \{ 1-\sigma,\, 1-2\sigma\}}(\log T)^{2\ell+2}) = o(T^{2-2\sigma})
\]
and 
\begin{align*}
F_{2\ell}(T, 2\sigma) &= \int_1^T \frac{\partial^{2\ell}}{\partial s^{2\ell}}\Big(\zeta(s)+ x^{1-s}\zeta(2-s) \Big)\Big |_{s = 2\sigma} dx \\
&\sim \zeta(2-2\sigma)\int_{1}^T x^{1-2\sigma} (\log x)^{2\ell}dx \\
&\sim \frac{\zeta(2-2\sigma)}{2-2\sigma} T^{2-2\sigma} (\log T)^{2\ell}.
\end{align*}

\end{proof}

Immediately, we obtain
\begin{corollary} Let $\ell \in \mathbb{N}$, $\beta \in [0, 1)$ and $\sigma \in (-\infty , \, \frac{1}{2})$ be fixed.  Then for large $T$, 
\begin{equation}\label{eq:sigma < 1/2} \max_{T^{\beta}\leqslant t\leqslant T}\left|\zeta^{(\ell)}\Big(\sigma+it\Big)\right| \geqslant (1+o(1)) (2\pi)^{\sigma-\frac{1}{2}}\sqrt{\frac{\zeta(2-2\sigma)}{2-2\sigma}} T^{\frac{1}{2}-\sigma}(\log T)^{\ell}\,. \end{equation}
\end{corollary}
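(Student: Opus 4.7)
The plan is to deduce the corollary from Claim \ref{moment: sigma<1/2} by the standard ``maximum dominates average'' argument. Specifically, I will start from the trivial inequality
\[
\max_{T^{\beta}\leqslant t \leqslant T}\left|\zeta^{(\ell)}(\sigma+it)\right|^2 \cdot (T - T^{\beta}) \;\geqslant\; \int_{T^{\beta}}^{T} \left|\zeta^{(\ell)}(\sigma+it)\right|^2 dt,
\]
and then estimate the right-hand side via the second-moment asymptotic established in Claim \ref{moment: sigma<1/2}.

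First I would write
\[
\int_{T^{\beta}}^{T} \left|\zeta^{(\ell)}(\sigma+it)\right|^2 dt \;=\; \int_{0}^{T} \left|\zeta^{(\ell)}(\sigma+it)\right|^2 dt \;-\; \int_{0}^{T^{\beta}} \left|\zeta^{(\ell)}(\sigma+it)\right|^2 dt,
\]
and apply Claim \ref{moment: sigma<1/2} to both terms on the right. By the claim, the first term is asymptotic to $(2\pi)^{2\sigma-1}\tfrac{\zeta(2-2\sigma)}{2-2\sigma}\,T^{2-2\sigma}(\log T)^{2\ell}$, while the second term is $O\!\left(T^{\beta(2-2\sigma)}(\log T)^{2\ell}\right)$. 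Since $\beta<1$ and $2-2\sigma > 0$, we have $\beta(2-2\sigma) < 2-2\sigma$, so the subtracted part is of lower order and can be absorbed into the $(1+o(1))$ factor. Thus
\[
\int_{T^{\beta}}^{T} \left|\zeta^{(\ell)}(\sigma+it)\right|^2 dt \;\sim\; (2\pi)^{2\sigma-1}\frac{\zeta(2-2\sigma)}{2-2\sigma}\,T^{2-2\sigma}(\log T)^{2\ell}.
\]

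Dividing by $T - T^{\beta} = (1+o(1))T$ and taking square roots (both sides are nonnegative), I obtain
\[
\max_{T^{\beta}\leqslant t\leqslant T}\left|\zeta^{(\ell)}(\sigma+it)\right| \;\geqslant\; (1+o(1))(2\pi)^{\sigma-\frac{1}{2}}\sqrt{\frac{\zeta(2-2\sigma)}{2-2\sigma}}\,T^{\frac{1}{2}-\sigma}(\log T)^{\ell},
\]
which is exactly the assertion. There is no serious obstacle here: the entire content of the corollary is packaged into Claim \ref{moment: sigma<1/2}, and the only small check needed is that $\beta(2-2\sigma) < 2-2\sigma$ so that the truncation at $T^{\beta}$ does not damage the asymptotic. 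The factor $\zeta(2-2\sigma)$ is finite and positive for $\sigma < 1/2$ (since $2-2\sigma > 1$), so the constant on the right-hand side is well-defined and positive, completing the argument.
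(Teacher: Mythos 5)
Your proof is correct and is exactly the argument the paper intends: the paper derives the corollary from Claim~\ref{moment: sigma<1/2} via the trivial ``max dominates average'' inequality, and your write-up fills in the routine details (truncation at $T^{\beta}$ being of lower order, absorbing $\log\frac{T}{2\pi}$ vs.\ $\log T$ into the $1+o(1)$). No discrepancies.
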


Note that the lower bound in Theorem \ref{Main: sigma =  1} increases when $\ell$ increases. So it's natural to have the following conjecture.
\begin{conjecture}
If $T$ is sufficiently large, then uniformly for all positive integers $\ell_1, \ell_2 $ $\leqslant$ $(\log T)$ $\cdot (\log_2 T)^{-1}$, such that $ \ell_1 < \ell_2$, we have
\begin{align*}
\max_{T \leqslant t\leqslant 2T} \left|\zeta^{(\ell_1)}(1 + it)\right|   < \max_{T \leqslant t\leqslant 2T} \left|\zeta^{(\ell_2)}(1 + it)\right|\,.
\end{align*}
\end{conjecture}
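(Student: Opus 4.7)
The plan is in two stages: (i) observe that the explicit lower bound of Theorem \ref{Main: sigma =  1} is itself strictly monotone in $\ell$, and then (ii) try to promote this to monotonicity of the true maxima by running a single resonator, common to both $\ell_1$ and $\ell_2$. Set
\[ M(\ell):=\max_{T\le t\le 2T}|\zeta^{(\ell)}(1+it)|,\qquad L:=\log_2 T-\log_3 T+O(1),\qquad g(\ell):=e^{\gamma}\ell^{\ell}(\ell+1)^{-(\ell+1)}L^{\ell+1}, \]
so that Theorem \ref{Main: sigma =  1} reads $M(\ell)\ge g(\ell)$. A direct calculation, using AM--GM in the form $\ell(\ell+2)\le(\ell+1)^{2}$, gives
\[ \frac{g(\ell+1)}{g(\ell)}\;=\;L\cdot\frac{(\ell+1)^{2(\ell+1)}}{\ell^{\ell}(\ell+2)^{\ell+2}}\;\ge\;L\cdot\frac{(\ell+1)^{2}}{(\ell+2)^{2}}\;\ge\;\frac{4L}{9}, \]
which exceeds $1$ once $T$ is large enough. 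So $g$ is strictly increasing on the allowed range of $\ell$, and the conjecture reduces to a control statement: $M(\ell)$ must not lag behind $g(\ell)$ by an amount exceeding the gap $g(\ell_2)-g(\ell_1)$.

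The natural tool for closing this gap is the observation that the resonator of Proposition \ref{maxRatio} --- $r(n)$ the characteristic function of the divisors of $\PP(x,b)$ with $x=(\log T)/(3\log_2 T)$ and $b=[\log_2 T]$ --- is \emph{independent of} $\ell$. Writing $R(t)=\sum_{n\le\sqrt T}r(n)n^{-it}$, the moments $M_2(R,T,\ell_1)$ and $M_2(R,T,\ell_2)$ satisfy the same diagonal asymptotic derived in the proof of Theorem \ref{Main: sigma =  1}, with leading constants in the precise ratio $g(\ell_2)/g(\ell_1)$. The next step is to study the auxiliary integral
\[ J(R,T)\;:=\;\int_{T}^{2T}|R(t)|^{2}\Phi\!\Big(\tfrac{t}{T}\Big)\bigl[(-1)^{\ell_2}\zeta^{(\ell_2)}(1+it)-(-1)^{\ell_1}\zeta^{(\ell_1)}(1+it)\bigr]\,dt, \]
which by the first step is positive and of order $(g(\ell_2)-g(\ell_1))M_1(R,T)>0$. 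Localising to the set $E\subset[T,2T]$ on which $|R|^{2}\Phi(t/T)$ concentrates --- a set that carries the bulk of all the integrals in play --- one hopes to extract a resonant point $t^{*}\in E$ where $|\zeta^{(\ell_2)}(1+it^{*})|>|\zeta^{(\ell_1)}(1+it^{*})|$, both of the predicted size $g(\ell_i)$.

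The main obstacle is the passage from a pointwise comparison at a resonant $t^{*}$ to the global statement $M(\ell_1)<M(\ell_2)$: a priori the supremum of $|\zeta^{(\ell_1)}(1+it)|$ on $[T,2T]$ could be attained at some exceptional $t_0\notin E$, invisible to the resonator. Ruling this out unconditionally requires an upper bound on $M(\ell_1)$ of strength comparable to $g(\ell_2)$, which is beyond current technology. A conditional route appears more realistic: if one assumes a Granville--Soundararajan-style asymptotic $M(\ell)\sim c_{\ell}(\log_2 T)^{\ell+1}$ (plausible under RH via a derivative analogue of Littlewood's $(2e^{\gamma}+o(1))\log_{2}t$ bound), it suffices to verify that $c_{\ell}$ is strictly increasing for $\ell\le(\log T)/(\log_2 T)$, which is a bookkeeping exercise matching the monotonicity of $g$. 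A purely unconditional attack seems to demand a genuinely new ingredient --- for instance, a two-parameter resonator tuned directly to the combination $(-1)^{\ell_2}\zeta^{(\ell_2)}(s)-C(-1)^{\ell_1}\zeta^{(\ell_1)}(s)$ with a carefully chosen $C$, bypassing the need for upper bounds at the cost of extending the log-type GCD analysis of Propositions \ref{maxRatio} and \ref{GCD_log} to such mixed linear combinations.
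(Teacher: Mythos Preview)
The statement you are attempting to prove is labeled a \emph{Conjecture} in the paper and appears in the section ``Discussions, Open Problems and Conjectures''; the paper offers no proof, only the motivating remark that the lower bound of Theorem~\ref{Main: sigma =  1} increases with $\ell$. So there is no proof in the paper to compare your proposal against.

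Your proposal is candid about this and does not actually claim to prove the conjecture. Stage~(i) is correct: the computation showing $g(\ell+1)/g(\ell)\ge 4L/9$ is fine, and this is exactly the heuristic the paper alludes to. But stage~(ii) does not close, and you diagnose the reason accurately. The shared-resonator integral $J(R,T)$ being positive only tells you that on a set weighted by $|R(t)|^2\Phi(t/T)$ the $\ell_2$-derivative dominates in an averaged sense; it gives no information about the unweighted suprema, which could be attained at points where the resonator is small. Promoting a resonant lower bound to the strict inequality $M(\ell_1)<M(\ell_2)$ genuinely requires an upper bound on $M(\ell_1)$ of comparable precision, and no such bound is known unconditionally (even under RH the available upper bounds for $|\zeta(1+it)|$ are off by a factor of $2$ from the conjectured truth, and the situation for derivatives is no better). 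Your ``conditional route'' is circular as stated: assuming $M(\ell)\sim c_\ell(\log_2 T)^{\ell+1}$ with increasing $c_\ell$ is essentially assuming the conjecture. The two-parameter resonator idea for the combination $(-1)^{\ell_2}\zeta^{(\ell_2)}-C(-1)^{\ell_1}\zeta^{(\ell_1)}$ would again only yield a lower bound for a maximum of a difference, not the desired inequality between two separate maxima.

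In short: your write-up is a reasonable explanation of why the statement is a conjecture, not a proof of it, and the paper does not claim otherwise.
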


 When $\ell$ is fixed,  we have the following conjecture, inspired by the conjecture of Granville-Soundararajan .
\begin{conjecture}
Let $\ell \in \mathbb N$ be given. Then there exists a polynomial $P_{\ell+1}(x,y)$ of total degree $\ell+1$ such that 
\begin{equation*}
  \max_{T \leqslant t\leqslant 2T}  \left|\zeta^{(\ell)}(1 + it)\right| = P_{\ell+1}(\log_2 T,\, \log_3 T) + o(1), \,\quad as \,\, T \to \infty.
\end{equation*}
In particular, there exists a positive constant $c_{\ell}$ such that
\[
 \max_{T \leqslant t\leqslant 2T}  \left|\zeta^{(\ell)}(1 + it)\right| \sim c_{\ell} \cdot (\log_2 T)^{\ell+1}, \quad as \,\, T \to \infty.
\]
\end{conjecture}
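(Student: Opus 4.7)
The plan is to adapt the Granville-Soundararajan probabilistic framework, which underlies their conjecture for the case $\ell=0$, to higher derivatives via a Bell-polynomial decomposition of $\zeta^{(\ell)}$. Applying Fa\`a di Bruno to $\zeta=e^{\log\zeta}$ yields the exact identity
\[
\zeta^{(\ell)}(s) = \zeta(s)\cdot B_{\ell}\bigl(L_1(s), L_2(s),\ldots, L_{\ell}(s)\bigr),
\]
where $L_k(s) := (d/ds)^k \log \zeta(s) = (-1)^k \sum_{n\geqslant 2}\Lambda(n)(\log n)^{k-1} n^{-s}$ and $B_\ell$ is the $\ell$-th complete Bell polynomial. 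This reduces the problem to understanding the joint behaviour on $\Re s=1$ of $\zeta$ together with its logarithmic derivatives $L_1,\ldots,L_\ell$, each of which is a prime sum amenable to Euler-product analysis.

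First I would truncate the prime support at $y=(\log T)^A$ for $A$ large, and use standard zero-density / Dirichlet-polynomial techniques (as in Granville-Soundararajan) to show that for all but a set of $t\in[T,2T]$ of measure $o(T)$,
\[
\zeta^{(\ell)}(1+it) = \zeta_y(1+it)\cdot B_\ell\bigl(L_{1,y}(1+it),\ldots,L_{\ell,y}(1+it)\bigr) + o(1),
\]
where $\zeta_y,\,L_{k,y}$ denote the Euler-product and prime-sum truncations to $p\leqslant y$. Then model $\{p^{-it}\}_{p\leqslant y}$ as independent uniform random variables on the unit circle and analyse the extremum of the resulting random Euler product over the many essentially independent trials offered by $[T,2T]$. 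At the extremal configuration (where $p^{-it}\approx 1$ for $p\leqslant \log T$), Mertens-type summation gives $\zeta_y(1+it)\sim e^\gamma\log_2 T$, while $L_{k,y}(1+it)\sim (-1)^k(\log_2 T)^k/k$ for each $k\geqslant 1$. Feeding this into $B_\ell$, every monomial $L_1^{a_1}\cdots L_\ell^{a_\ell}$ with $\sum_j j\,a_j=\ell$ carries the common sign $(-1)^\ell$ and the same weight $(\log_2 T)^\ell$, so $B_\ell\sim (-1)^\ell C_\ell(\log_2 T)^\ell$ for an explicit Bell-type constant $C_\ell$. This yields $\max|\zeta^{(\ell)}(1+it)|\sim c_\ell(\log_2 T)^{\ell+1}$ with $c_\ell=e^\gamma C_\ell$, matching the order of the lower bound in Theorem \ref{Main: sigma =  1}. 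Refining to the two-variable polynomial $P_{\ell+1}(\log_2 T,\log_3 T)$ then amounts to carrying the random-model computation to second order -- producing both the $\log_3 T$ correction coming from the depth of the Euler product at the extremum (as in G-S for $\ell=0$) and its propagation through the Bell polynomial.

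The principal obstacle is that the $\ell=0$ case of this conjecture \emph{is} the Granville-Soundararajan conjecture itself, which is still open: the best unconditional lower bound is $e^\gamma(\log_2 T+\log_3 T-C)$ of Aistleitner-Mahatab-Munsch, while all known upper bounds, even on RH, are off by a multiplicative factor (Chandee-Soundararajan give $(2e^\gamma+o(1))\log_2 T$). Any rigorous treatment of $\ell\geqslant 1$ therefore seems to require first resolving $\ell=0$. A second, more subtle difficulty is that the maximum of $|\zeta^{(\ell)}(1+it)|$ need not occur near the maximum of $|\zeta(1+it)|$: the extremal $t$ must jointly optimize the product $\zeta\cdot B_\ell(L_1,\ldots,L_\ell)$, which is a multi-variate large-deviation problem in the random Euler-product model and would call for new ideas beyond the resonance method used to prove Theorem \ref{Main: sigma =  1}.
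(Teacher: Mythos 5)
The statement you were given is stated as a \emph{Conjecture} in the paper; no proof is offered there, only the lower bound of Theorem~\ref{Main: sigma =  1} and the analogy with the Granville--Soundararajan conjecture as motivation. Your proposal correctly treats it as such: you openly flag that the $\ell=0$ case is precisely the (open) Granville--Soundararajan conjecture and that a rigorous argument would require resolving it first, so you are not claiming a proof and there is no gap to report in the usual sense.

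That said, your heuristic actually sharpens the conjecture in a way the paper does not. The Fa\`a di Bruno decomposition $\zeta^{(\ell)}=\zeta\cdot B_\ell(L_1,\dots,L_\ell)$ is exact, the formula $L_k(s)=(-1)^k\sum_{n\geqslant 2}\Lambda(n)(\log n)^{k-1}n^{-s}$ is correct, and at the putative extremal configuration $p^{-it}\approx 1$ partial summation indeed gives $L_{k,y}(1+it)\sim(-1)^k(\log_2 T)^k/k$. Since every monomial $\prod_j x_j^{a_j}$ in the complete Bell polynomial carries the common sign $(-1)^{\sum j a_j}=(-1)^\ell$ and the same weight $(\log_2T)^\ell$, there is no cancellation in the top-degree term, and you obtain the concrete prediction $c_\ell=e^\gamma C_\ell$ with $C_\ell=\sum\ell!\big/\prod_j a_j!\,(j\cdot j!)^{a_j}$ (so $C_1=1$, $C_2=3/2$, etc.), which dominates the lower-bound constant $e^\gamma\ell^\ell(\ell+1)^{-(\ell+1)}$ of Theorem~\ref{Main: sigma =  1} as it must. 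This is a useful addition to the paper's open problem. Your second caveat is also well taken and in my view is the deeper one: the extremum of $|\zeta\cdot B_\ell|$ is a genuinely joint large-deviation problem for $(\zeta_y,L_{1,y},\dots,L_{\ell,y})$, and one would need to rule out configurations where a moderately large $|\zeta|$ is traded for an anomalously large $|B_\ell|$ (for instance, a $t$ where the $L_k$ are abnormally \emph{negative}/aligned rather than $p^{-it}\approx 1$ exactly); establishing that the $p^{-it}\approx1$ configuration is optimal, even in the random model, is not automatic once $\ell\geqslant 1$.
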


\begin{remark}
  Does  $~\lim_{\ell \to \infty}c_{\ell}$ exist?  In particular, do we have $\lim_{\ell \to \infty}c_{\ell} = 0$?
\end{remark}

When $\ell \in \mathbb N$ and $\sigma \in (0,1)$ are given, we think the maximum of derivatives of zeta function and maximum of zeta function only differs by multiplying some small factors. More precisely, we have the conjecture:
\begin{conjecture}Let $\ell \in \mathbb N$ and $\sigma \in (0,1)$ be fixed, then there exists constants $C(\sigma, \ell)$ and $c(\sigma, \ell)$ which  depend  on $\sigma$ and $\ell$, such that for sufficiently large $T$, we have
\begin{equation*}   (\log T)^{c(\sigma, \ell)} \cdot \max_{ T\leqslant t\leqslant 2T}\left|\zeta(\sigma+it)\right|\ll\max_{ T\leqslant t\leqslant 2T}\left|\zeta^{(\ell)}(\sigma+it)\right| \ll (\log T)^{C(\sigma, \ell)} \cdot  \max_{ T\leqslant t\leqslant 2T}\left|\zeta(\sigma+it)\right|, \end{equation*}
where the implied constants depend at most on $\sigma$ and $\ell$. Moreover, when $\sigma \in (0, \frac{1}{2}]$, then we can take $C(\sigma, \ell) = \ell + \alpha(\sigma) $  and $c(\sigma, \ell) = \ell+ \beta(\sigma)$, where $ \alpha(\sigma) $ and $\beta(\sigma)$ are constants depending at most  on $\sigma.$ 
\end{conjecture}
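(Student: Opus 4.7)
The plan is to establish the upper and lower comparisons separately, first for $\sigma\in(1/2,1)$ and then to reduce $\sigma\in(0,1/2]$ via the functional equation. For the upper bound $\max|\zeta^{(\ell)}(\sigma+it)|\ll(\log T)^{C(\sigma,\ell)}\max|\zeta(\sigma+it)|$ in the range $\sigma\in(1/2,1)$, I would apply Cauchy's integral formula on a circle of radius $r=1/\log T$ around the point $s_{0}=\sigma+it_{0}$ where $|\zeta^{(\ell)}(\sigma+it)|$ attains its maximum:
\[
|\zeta^{(\ell)}(s_{0})|=\left|\frac{\ell!}{2\pi i}\oint_{|z-s_{0}|=r}\frac{\zeta(z)}{(z-s_{0})^{\ell+1}}\,dz\right|\leqslant\ell!\,(\log T)^{\ell}\max_{|z-s_{0}|=r}|\zeta(z)|.
\]
This produces the $(\log T)^{\ell}$ factor cleanly; the remaining task is to bound the maximum of $|\zeta|$ on a thin vertical strip of width $2/\log T$ around $\Re z=\sigma$ by $\max_{[T,2T]}|\zeta(\sigma+it)|$, up to a further $(\log T)^{O(1)}$ factor.

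For the lower bound $\max|\zeta^{(\ell)}|\gg(\log T)^{-c(\sigma,\ell)}\max|\zeta|$, I would use Taylor's formula in the $\sigma$-variable anchored at $2+it$, where each $\zeta^{(k)}(2+it)=O_{k}(1)$:
\[
\zeta(\sigma+it)=\sum_{k=0}^{\ell-1}\frac{(\sigma-2)^{k}}{k!}\zeta^{(k)}(2+it)+\frac{(\sigma-2)^{\ell}}{(\ell-1)!}\int_{0}^{1}(1-\lambda)^{\ell-1}\zeta^{(\ell)}\bigl(2+\lambda(\sigma-2)+it\bigr)\,d\lambda.
\]
Maximizing over $t\in[T,2T]$ gives $\max|\zeta(\sigma+it)|\ll_{\ell}1+\max_{\sigma'\in[\sigma,2]}\max_{t}|\zeta^{(\ell)}(\sigma'+it)|$, and the inner maximum is then reduced to the line $\sigma'=\sigma$ by the same convexity argument needed for the upper bound. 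Note that for $\sigma'$ near $2$ the function $|\zeta^{(\ell)}(\sigma'+it)|$ is bounded, so only the portion $\sigma'\in[\sigma,\sigma+1/\log T]$, say, needs controlling precisely.

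For $\sigma\in(0,1/2]$, I would invoke the functional equation $\zeta(s)=\chi(s)\zeta(1-s)$ and Leibniz's rule,
\[
\zeta^{(\ell)}(s)=\sum_{k=0}^{\ell}\binom{\ell}{k}(-1)^{k}\chi^{(\ell-k)}(s)\,\zeta^{(k)}(1-s),
\]
together with the Stirling-type asymptotic $\chi^{(j)}(\sigma+it)=\chi(\sigma+it)\bigl(-\log(t/2\pi)\bigr)^{j}\bigl(1+O(1/\log t)\bigr)$. The dominant contribution of the Leibniz sum has magnitude $(\log t)^{\ell}|\chi(s)\zeta(1-s)|=(\log t)^{\ell}|\zeta(s)|$, while the remaining terms are controlled by applying the bounds already obtained on the reflected line $1-\sigma\geqslant 1/2$. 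After maximizing over $t\in[T,2T]$ (so that oscillatory cancellation between the Leibniz terms is diluted), this produces the asserted $C(\sigma,\ell)=\ell+\alpha(\sigma)$ and $c(\sigma,\ell)=\ell+\beta(\sigma)$.

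The main obstacle in both directions is the convexity-type step that compares $\max_{t\in[T,2T]}|\zeta(\sigma'+it)|$ between neighbouring vertical lines separated by $1/\log T$. Unconditionally this is governed by sharp estimates for $\zeta'/\zeta$ in the critical strip, which are notoriously delicate; a Hadamard three-lines argument between $\Re z=1/2+\eta$ and $\Re z=1+\eta$ would yield a bound, but the factor it produces might not be merely $(\log T)^{O(1)}$. An alternative that sidesteps this obstacle is to construct, in the spirit of the resonator analysis of de la Bret\`eche--Tenenbaum, a single resonator whose first and second moments simultaneously detect large values of both $\zeta$ and $\zeta^{(\ell)}$ at a common $t_{0}\in[T,2T]$, so that the two lower bounds can be compared at the same point without ever traversing a vertical line.
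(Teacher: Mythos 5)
The statement you set out to prove is labelled a \emph{Conjecture} in the paper; the author does not prove it, and your blind attempt cannot be checked against a model proof because none exists. What you have written is therefore a heuristic outline of why the conjecture is plausible, and it is valuable to examine whether your outline could in principle be tightened to a proof. It cannot, and you have in fact put your finger on exactly why: every route you propose passes through a comparison of $\max_{T\leqslant t\leqslant 2T}|\zeta(\sigma'+it)|$ across neighbouring vertical lines $\sigma'\in[\sigma-1/\log T,\sigma+1/\log T]$ (or, in the Taylor-remainder version, across $\sigma'\in[\sigma,1]$), and no such comparison is available. Your Cauchy-integral step is correct as far as it goes, but the resulting circle of radius $1/\log T$ around $\sigma+it_0$ leaves the line $\Re z=\sigma$, and there is no known bound of the shape
\begin{equation*}
\max_{|\sigma'-\sigma|\leqslant 1/\log T}\ \max_{T\leqslant t\leqslant 2T}|\zeta(\sigma'+it)|\ \ll\ (\log T)^{O(1)}\max_{T\leqslant t\leqslant 2T}|\zeta(\sigma+it)|.
\end{equation*}
The Hadamard three-lines device you mention interpolates between $\sigma$-lines multiplicatively and with exponents controlled by the relative distances, so it gives $(\max|\zeta(\sigma_1+\cdot)|)^{1-\theta}(\max|\zeta(\sigma_2+\cdot)|)^{\theta}$-type bounds rather than the additive, power-of-$\log$ loss you would need; applied naively it loses a positive power of $T$, not of $\log T$.

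Two further points. First, in the Taylor-remainder step your claim that ``only the portion $\sigma'\in[\sigma,\sigma+1/\log T]$ needs controlling'' does not follow: the integral remainder ranges over all $\sigma'\in[\sigma,2]$, and while $\zeta^{(\ell)}$ is uniformly bounded for $\sigma'\geqslant 1+\delta$, the region $\sigma'\in(\sigma,1]$ is not controlled by either endpoint, and the maximum over $t$ of $|\zeta^{(\ell)}(\sigma'+it)|$ could a priori be attained anywhere in that range. Second, in the functional-equation reduction for $\sigma\in(0,\tfrac12]$, after pulling out the common factor $\chi(s)$ you must bound $\bigl|\sum_{k}\binom{\ell}{k}(-1)^{k}(-\log t)^{\ell-k}\zeta^{(k)}(1-s)\bigr|$ from below by a constant multiple of $(\log t)^{\ell}|\zeta(1-s)|$ at the maximising $t$, but you have given no mechanism to rule out near-perfect cancellation among the Leibniz terms at that $t$; ``dilution of oscillatory cancellation'' is an expectation, not an argument. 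Your closing suggestion --- building a single resonator whose moments simultaneously detect large values of $\zeta$ and $\zeta^{(\ell)}$ at a common $t_{0}$ --- is the most promising avenue, precisely because it avoids ever moving between vertical lines, but making it rigorous is itself an open problem and is, in effect, the content of the conjecture.
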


When we try to give a different proof of Theorem \ref{Main: sigma =  1} via    Levinson's  approach \cite{L}, we meet with the following   problem. In particular, if the following problem has a positive solution, then a new proof for our  Theorem \ref{Main: sigma =  1} can be given. 

\begin{problem}

Let $\ell \in \mathbb N$ be given.
Find $n=n(k)$ and some positive constant $c_{\ell}$ such that if $k$ is sufficiently large, then we have
\begin{equation*}
   \Big( \frac{d_{k,\ell}(n)}{n} \Big)^{\frac{1}{k}}\geqslant c_{\ell} \cdot  (\log k)^{\ell +1} + O((\log k)^{\ell})  
\end{equation*}

and
\begin{equation*}
\log n = k\log k + O(k)  \,, 
\end{equation*}

where $d_{k, \ell}(n)$ is defined as

\begin{equation*}
   d_{k,\ell}(n):\,= \sum_{m_1 m_2 \cdots m_k = n} (\log m_1)^{\ell}(\log m_2)^{\ell} \cdots (\log m_k)^{\ell}\,.
\end{equation*}

\end{problem}

\begin{remark}
The arithmetic function $d_{k,\ell}(n)$ is not multiplicative, which makes the problem difficult. 
 \end{remark}

\begin{problem}
Study extreme values of derivatives of L-functions.
\end{problem}

\begin{problem}
 In our Theorem \ref{Main: sigma =  1}, we require $\ell \leqslant (\log T)(\log_2 T)^{-1}$. What is the largest possible range for $\ell$, that the result of Theorem \ref{Main: sigma =  1} can still be valid. For instance,  what can we say about the extreme values if $\ell = [T],\,  $ or $\ell = [2^T]$? 
\end{problem}

\begin{problem}
  Can one find some range for $\ell $, such that the results in Theorem \ref{Main} can still hold?
\end{problem}

\begin{remark}
The main terms always satisfy since we have $a_{\ell}(n) \geqslant 1 $ for all $n$ and $\ell$. It is not clear about the moments of derivatives of the zeta function if $\ell$ can depend on $T$. For instance, if we let $\ell = [(\log T)(\log_2 T)^{-1}]$, then what can we say about the second moments as $T \to \infty,$
\[\int_0^T \big|\zeta^{(\ell)}(\frac{1}{2}\!+\! it)\big| ^2 dt \sim \quad ?\] When $\ell$ depends on $T$, it also seems difficult to bound the contributions of $~\Delta^{+}+\Delta^{-}$ . 
\end{remark}

Moreover, we have the following general problem, which asks how large or how small the  extreme values of $\,|\zeta^{(\ell)}(\sigma\!+\! it)|$ can be if $\ell$ can be taken arbitrary large  with respect to the length $T$ of the interval $[T, 2T]$. 


\begin{problem}
Given  $\sigma_0 \in [0, 1]$, decide which one of following four properties can be true.

\begin{property*}[A]
Given any function $V:\, (0, +\infty) \to (0, +\infty)$, there always exists some function $f_V:\,(0, +\infty) \to \mathbb N$ such that if $\ell = f_V(T)$, then for sufficiently large $T$, we have
\begin{equation*}
     \max_{T \leqslant t\leqslant 2T}  \left|\zeta^{(\ell)}(\sigma_0 + it)\right| \ll V(T)\,.
\end{equation*}
\end{property*}

\begin{property*}[B]
Given any function $V:\, (0, +\infty) \to (0, +\infty)$, there always exists some function $f_V:\,(0, +\infty) \to \mathbb N$ such that if $\ell = f_V(T)$,  then for sufficiently large $T$, we have
\begin{equation*}
     \max_{T \leqslant t\leqslant 2T}  \left|\zeta^{(\ell)}(\sigma_0 + it)\right| \gg V(T) \,.
\end{equation*}
\end{property*}

\begin{property*}[C]
There exists some function $V:\, (0, +\infty) \to (0, +\infty)$, such that for all function  $f:\,(0, +\infty) \to \mathbb N$, if $\ell = f_V(T)$,  then for sufficiently large $T$, we have
\begin{equation*}
     \max_{T \leqslant t\leqslant 2T}  \left|\zeta^{(\ell)}(\sigma_0 + it)\right| \ll V(T) \,.
\end{equation*}
\end{property*}

\begin{property*}[D]
There exists some function $V:\, (0, +\infty) \to (0, +\infty)$, such that for all function  $f:\,(0, +\infty) \to \mathbb N$, if $\ell = f_V(T)$,  then for sufficiently large $T$, we have
\begin{equation*}
     \max_{T \leqslant t\leqslant 2T}  \left|\zeta^{(\ell)}(\sigma_0 + it)\right| \gg V(T) \,.
\end{equation*}
\end{property*}

\end{problem}

\medskip

\section*{Acknowledgements}
I am grateful to Christoph Aistleitner  for his guidance and many helpful discussions. 
I thank Marc Munsch  for a valuable suggestion. The work  was supported by the Austrian Science Fund (FWF), project W1230.

\end{document}